\newtheorem{thm}{Theorem}
\newtheorem{lem}[thm]{Lemma}
\newtheorem{cor}[thm]{Corollary}
\theoremstyle{definition}
\newtheorem{defn}[thm]{Definition}
\newtheorem{rem}[thm]{Remark}
\newtheorem{ex}[thm]{Example}
\DeclareMathOperator{\diam}{diam}
\DeclareMathOperator{\dist}{dist}
 \newcommand{\eps}{\varepsilon}
 \newcommand{\Y}{\mathbb{Y}}
 \newcommand{\Al}{\mathcal{A}}
 \newcommand{\Lom}{\mathcal{L}}
 \newcommand{\N}{\mathbb{N}}
 \newcommand{\R}{\mathbb{R}}
 \newcommand{\set}[1]{\left\{#1\right\}}
\newcommand{\dbar}{\bar{d}}
\DeclareMathOperator{\Emp}{\mathfrak{m}} 
\DeclareMathOperator{\di}{\overrightarrow d}
\DeclareMathOperator{\m}{\mathfrak m}
\DeclareMathOperator{\Gen}{Gen}
\newcommand{\seq}[2]{\{#1_{#2}\}_{#2=0}^{\infty}}
\newcommand{\emp}[2]{\mathfrak{m}(\underline{#1},#2)}
\newcommand{\s}[1]{\underline{#1}=\{#1_n\}_{n=0}^{\infty}}
\newcommand{\und}[1]{\underline{#1}}
\newcommand{\bbd}[2]{D_B({#1},{#2})}
\newcommand{\om}[1]{\hat{\omega}({#1})}
\renewcommand{\phi}{\varphi}
\DeclareMathOperator{\M}{\mathcal{M}}
\DeclareMathOperator{\MT}{\mathcal{M}_T}
\DeclareMathOperator{\G}{\mathcal{G}}
\def\blfootnote{\gdef\@thefnmark{}\@footnotetext}
\author{Dominik Kwietniak \and Martha {\L}{\k{a}}cka \and Piotr Oprocha}
\address[D. Kwietniak]{
Faculty of Mathematics and Computer Science, Jagiellonian University in Krakow, ul. \L o\-jasiewicza 6, 30-348 Krak\'ow, Poland -- \and --
Institute of Mathematics, Federal University of Rio de Janeiro,  Av. Athos da Silveira Ramos 149, Centro de Tecnologia - Bloco C, Cidade Universit\'{a}ria,
Ilha do Fund\~{a}o C.P. 68530, Rio de Janeiro 21945-909, Brazil
}\email{dominik.kwietniak@uj.edu.pl}
\urladdr{www.im.uj.edu.pl/DominikKwietniak/}
\address[M. {\L}{\k{a}}cka]{
Faculty of Mathematics and Computer Science, Jagiellonian University in Krakow, ul. \L o\-jasiewicza 6, 30-348 Krak\'ow, Poland}\email{martha.lacka@doctoral.uj.edu.pl}
\urladdr{www2.im.uj.edu.pl/MarthaLacka/}
\address[P. Oprocha]{AGH University of Science and Technology, Faculty of Applied
	Mathematics, al.
	Mickiewicza 30, 30-059 Krak\'ow, Poland
	-- \and --
National Supercomputing Centre IT4Innovations, Division of the University of Ostrava,
Institute for Research and Applications of Fuzzy Modeling,
30. dubna 22, 70103 Ostrava, Czech Republic}
\email{oprocha@agh.edu.pl}
\keywords{generic points, the asymptotic average shadowing property, dbar pseudometric}
\title{Generic points for dynamical systems with average shadowing}
\date{\today}
\begin{document}
\begin{abstract} We study the Besicovitch pseudometric $D_B$ for compact dynamical systems. The set of generic points of ergodic measures turns out to be  closed with respect to $D_B$. 
It is proved that the weak specification property implies the average asymptotic shadowing property and the latter property does not imply the former one nor the almost specification property. Furthermore an example of a proximal system with the average shadowing property is constructed.
It is proved that to every invariant measure $\mu$ of a compact dynamical system one can associate a certain asymptotic pseudo orbit such that any point asymptotically tracing in average that pseudo orbit is generic for $\mu$. A simple consequence of the theory presented is that every invariant measure has a generic point in a system with the asymptotic average shadowing property. 
\end{abstract}
\blfootnote{\textup{2010} \textit{Mathematics Subject Classification}: 37A05, 37B05, 37B10}
\maketitle

The specification property or one of its variants are often applied in the theory of dynamical systems\footnote{By a dynamical system we mean a pair $(X,T)$, where $X$ is a compact metric space and $T$ is a continuous map from $X$ to itself.} to construct invariant measures with special properties (for example, measures of maximal entropy or equilibrium states, see \cite{Bowenbook,DGSbook}). These properties also help to define points with predetermined statistical behaviour like generic points or ``non-typical'' points, see \cite{BS00,DGSbook}.
Here, using the Besicovitch pseudometric, we show how to construct a certain asymptotic pseudo orbit associated with a given measure $\mu$.
This generalizes several proofs of existence of generic points for a given invariant measure, provided that a sufficiently strong tracing method is available in the system, e.g. see \cite{Dateyama81, Dateyama82,Sigmund70, Sigmund74, Sigmund76, Sigmund77}.

The Besicovitch pseudodistance measures the average separation of points along two sequences in the space of all infinite $X$-valued sequences $X^{\infty}=\{\seq{x}{n}\,:\,x_n\in X\}$ and is given for $\s{x}$, $\underline{x}'=\seq{x'}{n}\in X^{\infty}$ by
	\[
	\bbd{\underline x}{\underline x'}=\limsup_{N\to\infty}\frac 1N\sum_{n=0}^{N-1}\rho(x_n,x_n').
	\]
It extends naturally to a pseudometric on $X$ given by the average separation of points
along orbits of a dynamical system $(X,T)$, that is,
\[
D_B(x,y)=\limsup_{N\to\infty}\frac{1}{N}\sum_{n=0}^{N-1}\rho(T^n(x),T^n(y)),
\]
where $x,y\in X$ and $\rho$ is a metric on $X$. An equivalent pseudometric is well known in the ergodic theory and symbolic dynamics as \emph{$\dbar$-pseudometric}, see \cite{ShieldsBook, WeissBook}. A similar Weyl pseudodistance and the accompanying notion of quasi-unifrom convergence were also widely studied, see \cite{DI,JK}.

We prove that a point is generic for a non-necessarily ergodic measure if it asymptotically traces a special approximate trajectory 
(which in turn is a variant of the average asymptotic pseudoorbit), where the Besicovitch pseudodistance is used to define the pseudotrajectory and measures the accuracy of tracing. Recall that $x\in X$ is generic for $\mu$ if
\[
\lim_{n\to\infty}\frac{1}{n}\sum_{j=0}^{n-1} f(T^j(x))=\int_X f\:\text{d}\:\!\mu \quad\text{ for every continuous }f\colon X\to\R.
\]
As a corollary we get that if a system has the asymptotic average shadowing property (a notion introduced by Gu in \cite{Gu07}, see below), then every invariant measure has a generic point, because in such a system every average asymptotic pseudo-orbit is followed by the orbit of some  point. The same result is proved independently in \cite{DTY} via a direct construction. We believe that using the Besicovitch pseudodistance provides a new perspective and leads to short proofs extending the theory (see the proof of Theorem~\ref{thm:pertrace} and Example \ref{ex:example}).

In order to compare our result with earlier developments, recall that generic points have full measure for every ergodic invariant measure, but non-ergodic invariant measures need not to have generic points. Sigmund proved that if a dynamical system has the periodic specification property (see \cite{Sigmund74}), then every invariant measure has a generic point. Many authors generalized this result.

We show that yet another variant of specification, known as the weak specification property implies the asymptotic average shadowing property. This together with results of \cite{KKO} gives us the following diagram illustrating the connections between main generalizations of specification (we refer the reader to \cite{KLO} for more details; here ``AASP'' stands for ``(the) asymptotic average shadowing property''):
\begin{figure}[h]
\includegraphics[scale=0.7]{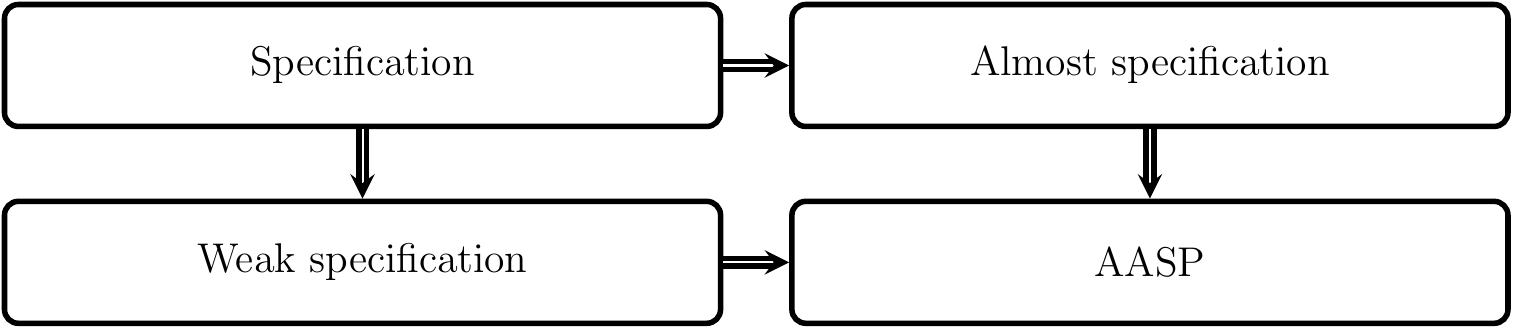}\label{fig:ex}
\end{figure}

It is proved in \cite{KOR} that there are systems with the weak specification property, but without the almost specification property, hence there are systems with the asymptotic average shadowing property, but without almost specification property. This answers a problem we considered in \cite{KKO}. We finish the paper with an example demonstrating that there are proximal systems with the average shadowing property, which also answers some natural questions one may ask about properties of systems with a generalized shadowing. 

This paper is organized as follows. In Section 1 we recall some standard definitions from topological dynamics and related fields.
Section 2 discusses the definition and basic properties of the Besicovitch pseudometric.
In Section 3 we show that a set of measures generated by an orbit varies continuously as a function from $X$ equipped with the Besicovitch pseudometric to the space of compact subsets of invariant measures considered with the Hausdorff distance. It follows that the set of generic points of ergodic measures is a closed set with respect to the Besicovitch pseudodistance. Section 4 establishes the relation between weak specification property and asymptotic average shadowing property. Possible applications of our studies on Besicovitch pseudodistance are provided in Section 5. Section 6 contains an example of a proximal, mixing shift space exhibiting the average shadowing property, but neither almost nor weak specification.

\section{Topological dynamics}

\subsection*{Notation} Throughout this paper a pair $(X,T)$ is a dynamical system.
This means that $X$ is a compact metric space and $T\colon X\to X$ is a continuous map.
All properties of $T$ considered here are invariants of topological conjugacy, hence they do not depend on the choice of metric.
Therefore we fix a metric $\rho$ for $X$ and assume without loss of generality that the diameter of $X$ is equal to $1$.

Given a compact metric space $Y$ and $A\subset Y$ we denote by $\dist(y,A)$ the \emph{distance of a point $y\in Y$ to the set $A$}.
For $\eps>0$ we write $A^{\eps}$ for the \emph{$\eps$-hull} of $A$, that is the set $\{y\in X\,:\,\dist(y,A)<\eps\}$.
Let $2^{Y}=\{K\subset Y\,:\,K\text{ compact and }K\neq\emptyset\}$ and $H_{Y}\colon2^{Y}\times2^{Y}\to\mathbb R_+$ be the \emph{Hausdorff metric} on $2^Y$ given by
\[
H_Y(A,B)=\max\big\{\inf\{\eps>0\,:\,B\subset A^{\eps}\},\,\inf\{\eps>0\,:\,A\subset B^{\eps}\}\big\}.
\]
The \emph{upper asymptotic density} of $A\subset \N_0$ is as usual denoted by $\dbar(A)$ and given by
\[
\dbar(A)=\limsup_{n\to\infty}\frac{\#(A\cap \{0,1,\ldots,n-1\})}{n}.
\]
If the above upper limit is in fact a limit, we denote it by $d(A)$ and call the \emph{asymptotic density} of $A$.
\subsection*{Recurrence properties} Recall that $T$ is \emph{transitive} if for all non-empty open sets $U,V\subset X$ there exists
$n\geq 0$ such that $T^n(U)\cap V\neq\emptyset$. We say that $T$ is \emph{(topologically) mixing} if for all non-empty open sets $U,V\subset X$ there exists
$N\geq 0$ such that for all $n\geq N $ one has $T^n(U)\cap V\neq\emptyset$. A map $T$ is \emph{(topologically) weakly mixing} if the product map $T\times T$ is transitive. Finally, $T$ is \emph{chain mixing} if for every $\delta>0$ and all $x,y\in X$  there exists $N\in\N$ such that for every $n\geq N$ there is a finite sequence $x_0,x_1,\ldots,x_n$ in $X$ satisfying $x_0=x$, $x_n=y$, and $\rho(T(x_{i-1}),x_i)<\delta$ for $i=1,\ldots,n$. 

\subsection*{Average shadowing} A sequence $\seq{x}{n}\in X^{\infty}$ is a \emph{$\delta$-average pseudoorbit} for $T$ if there is an integer $N_0>0$ such that for every $N>N_0$ and $k\geq 0$  one has
\[
\frac{1}{N}\sum_{n=0}^{N-1}\rho\big(T(x_{n+k}),x_{n+k+1}\big)<\delta.
\]
A sequence $\seq{x}{n}\in X^{\infty}$ is an \emph{asymptotic average pseudoorbit} for $T$ if
\[
\lim\limits_{N\to\infty}\frac{1}{N}\sum_{n=0}^{N-1} \rho(T(x_n),x_{n+1})=0.
\]
 Given $\eps>0$ we say that a sequence $\{x_j\}_{j=0}^{\infty}$ is \emph{$\eps$-shadowed in average} by a point $x\in X$ if
$$\limsup\limits_{N\to\infty}\frac{1}{N}\sum_{n=0}^{N-1}\rho\big(T^n(x), x_n\big)<\eps.$$
We say that $\seq{x}{n}\in X^{\infty}$ is \emph{asymptotically shadowed in average} by $x\in X$ if
$$\lim\limits_{N\to\infty}\frac{1}{N}\sum_{n=0}^{N-1}\rho\big(T^n(x),x_n\big)=0.$$
A dynamical system $(X,T)$ has the \emph{average shadowing property} if for every $\eps>0$ there exists $\delta>0$ such that every $\delta$-average pseudo-orbit is $\eps$-shadowed in average by some point. 
A system $(X,T)$ has the \emph{asymptotic average shadowing property} 
if every asymptotic average pseudo-orbit of $T$ is asymptotically shadowed in average by some point.

\subsection*{Weak specification}
An \emph{orbit segment} of a point $x$ over an interval $[a,b]$, where $a\le b$ are nonnegative integers is the sequence
$$T^{[a,b]}(x)=(T^a(x), T^{a+1}(x),\ldots, T^{b-1}(x),T^b(x)).$$
Let $N\colon \N\to\N$ be any function.
A~family of orbit segments $\xi=\{T^{[a_j,b_j]}(x_j)\}_{j=1}^n$ is an \emph{$N$-spaced specification} if
$a_i-b_{i-1} \ge  N(b_i-a_i+1) $ for $2\le i \le n$.
A specification $\xi=\{T^{[a_j,b_j]}(x_j)\}_{j=1}^n$
is $\eps$-traced by $y\in X$ if
\[
\rho(T^k(y),T^k(x_i)) \le \eps \quad\text{for } a_i\le k \le b_i\text{ and }1\le i \le n.
\]
A  dynamical system $(X,T)$ has the \emph{weak specification property} if for every $\eps>0$ there is a nondecreasing function $M_\eps \colon \N \to \N$ with $M_\eps(n)/n\to 0$ as $n\to \infty$ such that any $M_\eps$-spaced specification is $\eps$-traced by some point in $X$. We refer the reader to \cite{KLO} for more information about specification.

\subsection*{Invariant measures}
Let $\mathcal M(X)$ be the set of all Borel probability measures on $X$. Let $\di\colon\mathcal M(X)\times\mathcal M(X)\to\mathbb R_+$ be the Prokhorov metric, that is let
\[
\di(\mu,\nu)=\inf\big\{\eps>0\,:\,\mu(B)\leq\nu(B^{\eps})+\eps\text{ for every Borel set }B\subset X\big\}.
\]
It is well known that the topology induced by $\di$ is the weak$^\ast$ topology on $\mathcal M(X)$.
A measure $\mu\in\mathcal M(X)$ is:\begin{itemize}
\item \emph{$T$-invariant}, if $\mu(T^{-1}(B))=\mu(B)$ for every Borel set $B\subset X$,
\item \emph{ergodic}, if it is $T$-invariant and for every Borel set $B\subset X$ with $T^{-1}(B)=B$ either $\mu(B)=0$ or $\mu(B)=1$.
\end{itemize}
We denote by $\mathcal M_T(X)\subset\mathcal M(X)$ the family of all $T$-invariant measures and by $\mathcal M_T^e(X)$ the family of all ergodic measures. The \emph{measure center} of $(X,T)$ is the smallest closed subset $F$ of $X$ such that $\mu(F)=1$ for every $T$-invariant measure $\mu$.

For $x\in X$ let $\hat\delta(x)\in\mathcal M(X)$ be the~Dirac measure supported on~$\{x\}$.
Given $\underline{x}=\seq{x}{j}\in X^{\infty}$ and $n\in\N$ we define an \emph{empirical measure}:
$$\m(\underline x, n)=\frac{1}{n}\sum_{i=0}^{n-1}\hat\delta(x_i).$$
For $x\in X$ let $\m_T(x,n)$ denote the measure $\m(\{T^j(x)\}_{j=0}^{\infty},n)$. If $T$ is clear from the context, then we write $\m(x,n)$ for $\m_T(x,n)$. We call such a measure an \emph{empirical measure of $x$}.
A point $x\in X$ is called \emph{generic} for $\mu\in\mathcal M(X)$ if the sequence $\{\m_T(x,n)\}_{n=0}^{\infty}$ converges to $\mu$ in the weak$^\ast$ topology. The set of generic points of $\mu$ is denoted $\Gen(\mu)$.

A measure $\mu\in \M(X)$ is a \emph{distribution measure} for a sequence $\s{x}\in X^\infty$ if $\mu$ is a limit of some subsequence of
$\{\Emp(\underline{x},n)\}_{n=1}^\infty$. The set of all distribution measures of a sequence $\underline{x}$ is denoted by $\om{\underline{x}}$. Clearly, $\om{\underline{x}}$ is a closed and nonempty subset of $\M(X)$ (since $\M(X)$ is compact), it is also connected, because $\di(\Emp(\underline{x},n),\Emp(\underline{x},n+1))\to 0$ as $n\to\infty$.

\subsection*{Symbolic dynamics}
Let $\mathcal{A}$ be a finite set of \emph{symbols} with discrete topology. Let $\mathcal{A}^\infty$ be a product topological spaces.
By $\sigma$ we denote the continuous onto map from $\mathcal{A}^\infty$ to itself given by $\sigma(\omega)_i=\omega_{i+1}$, where $\omega=(\omega_i)\in\mathcal{A}^\infty$. A \emph{shift space} is a nonempty, closed, and $\sigma$-invariant subset of $\mathcal{A}^\infty$.
For more details on symbolic dynamics we refer to \cite{ISDC}.




\section{Besicovitch Pseudometric}\label{sec:Bes}


To elucidate the construction of generic points we introduce the \emph{Besicovitch pseudometric}. 
Recall that a pseudometric on $X$ is a function $p\colon X\times X\to[0,\infty)$ which is symmetric, obeys the triangle inequality,
and $p(x,x)=0$ for all $x\in X$. Every pseudometric space is a topological space
in a natural manner, and most notions defined for metric spaces, like the (uniform) equivalence of metrics, remain meaningful for
pseudometric spaces.

Recall that $X^\infty$ denotes the set of all sequences indexed with nonnegative integers with entries in $X$.
Given $T\colon X\to X$ we can define a map (also denoted by $T$) on $X^\infty$ by $T(\underline x)=\{T(x_n)\}_{n=0}^{\infty}$
for any $\s{x}\in X^{\infty}$. We define the \emph{shift map} on $X^\infty$ by
$\sigma(\underline x)_n=x_{n+1}$ for $n\in\N_0$ and $\s{x}\in X^{\infty}$.

We can restate the definition of a pseudo orbit in our notation as follows:
a sequence $\s{x}\in X^\infty$ is a $\delta$-\emph{pseudo orbit} if $D_{\sup}(T(\underline{x}),\sigma(\underline{x}))<\delta$,
where $D_{\sup}$ denotes the supremum metric on $X^\infty$. The orbit of $y\in X$ is said to \emph{$\eps$-trace}
$\underline{x}\in X^\infty$ if the sequence of $\{T^n(y)\}_{n=0}^{\infty}\in X^\infty$ is $\eps$-close to $\underline{x}$ in
the $D_{\sup}$ sense. It turns out that in order to construct invariant measures and generic points it is convenient to work
with the following pseudometric on $X^\infty$.

\begin{defn}
	The \emph{Besicovitch pseudometric} $D_B$ on $X^{\infty}$ is defined for $\s{x}$, $\underline{x}'=\seq{x'}{n}\in X^{\infty}$ by
	\[
	\bbd{\underline x}{\underline x'}=\limsup_{N\to\infty}\frac 1N\sum_{n=0}^{N-1}\rho(x_n,x_n').
	\]
	The \emph{Besicovitch pseudometric} on $(X,T)$, also denoted by $D_B$,  equals the $D_B$-distance between orbits, that is,
	\[
	D_B(x,x')=D_B(\{T^n(x)\}_{n=0}^{\infty},\{T^n(x')\}_{n=0}^{\infty}).
	\]
\end{defn}
It is easy to see that these functions are indeed pseudometrics on $X^\infty$ and $X$, respectively. Furthermore,
$D_B$ is $\sigma$-invariant on $X^\infty$ and $T$-invariant on $X$. Besicovitch used a similar pseudodistance in
his study of almost periodic functions, but the idea of measuring the average distance along two orbits in a
dynamical systems is so natural that probably it has been reinvented independently by many authors, see Aulsander \cite{Auslander59}
who mentions earlier works of Fomin and Oxtoby, or \cite{BFK, Downarowicz14, DownarowiczGlasner, FGJ, OW04} for more recent applications.


The pseudometric $D_B$ depends on $\rho$ but this is not reflected in our notation because we will show that equivalent metrics induce uniformly equivalent
Besicovitch pseudometrics. 


\begin{lem}\label{lem:db-prim}
Let $X$ be a compact metric space and $\rho$ be a compatible metric on $X$. Then the Besicovitch pseudometric and the pseudometric
$D'_B$ on $X^\infty$ given for $\und{x}=\{x_i\}_{i=0}^\infty$ and $\und{z}=\{z_i\}_{i=0}^\infty$ by
\[
D'_B(\und{x},\und{z})=\inf
\{\delta>0: \dbar(\{n\ge 0: \rho(x_n,z_n)\ge \delta\})<\delta\}
\]
are uniformly equivalent on $X^\infty$.
\end{lem}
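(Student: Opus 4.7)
To establish uniform equivalence, I need to verify two implications: for every $\eps>0$ there is $\delta>0$ such that $D_B<\delta\Rightarrow D'_B<\eps$, and conversely. Throughout, write $A_\eta=\{n\ge 0:\rho(x_n,z_n)\ge \eta\}$, and recall that $\diam X\le 1$ so $\rho(x_n,z_n)\le 1$ for all $n$.

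For the implication $D'_B<\delta\Rightarrow D_B<\eps$: if $D'_B(\und x,\und z)<\delta$, then by definition of the infimum there exists $\eta<\delta$ with $\dbar(A_\eta)<\eta$. Split the Cesàro sum defining $D_B$ according to whether $n\in A_\eta$ or not. On indices outside $A_\eta$ each term is $<\eta$, contributing at most $\eta$; on indices inside $A_\eta$ each term is bounded by $1$, contributing at most $\#(A_\eta\cap[0,N))/N$. Taking $\limsup_{N\to\infty}$ gives
\[
D_B(\und x,\und z)\le \dbar(A_\eta)+\eta<2\eta<2\delta.
\]
So the choice $\delta=\eps/2$ works.

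For the converse $D_B<\delta\Rightarrow D'_B<\eps$: this is a Markov-type inequality. For any $\eta>0$,
\[
\frac{1}{N}\sum_{n=0}^{N-1}\rho(x_n,z_n)\ge \eta\cdot\frac{\#(A_\eta\cap[0,N))}{N},
\]
so taking $\limsup$ yields $\dbar(A_\eta)\le D_B(\und x,\und z)/\eta<\delta/\eta$. Choose $\eta:=\sqrt{2\delta}$; then $\dbar(A_\eta)<\sqrt{\delta/2}<\eta$, which witnesses $D'_B(\und x,\und z)\le \eta=\sqrt{2\delta}$. Picking $\delta$ so small that $\sqrt{2\delta}<\eps$ (e.g.\ $\delta<\eps^2/2$) delivers the required implication.

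The argument is essentially routine; the only mildly delicate points are keeping strict vs.\ non-strict inequalities straight in the definition of $D'_B$ (handled by choosing $\eta$ with a little room to spare, like $\eta=\sqrt{2\delta}$ rather than $\sqrt{\delta}$) and noting that the boundedness of $\rho$ is essential for controlling the contribution of the "bad" indices in the first direction. No obstacle of substance appears; the proof reduces to a one-paragraph Chebyshev-style estimate in each direction.
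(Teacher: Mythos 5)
Your proof is correct and follows essentially the same route as the paper's: the paper encodes both directions in the two-sided estimate $\delta\cdot\dbar(J_\delta)\le D_B\le \dbar(J_\delta)+\delta$ obtained by summing $\delta\chi_\delta(n)\le\rho(x_n,z_n)\le\chi_\delta(n)+\delta$, which is exactly your Markov inequality in one direction and your split of the Ces\`aro sum in the other. The quantitative choices ($\delta=\eps/2$ and $\delta\lesssim\eps^2$) also match; your explicit handling of the infimum in the definition of $D'_B$ is, if anything, slightly more careful than the paper's.
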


\begin{proof} Given $\und{x},\und{z}\in X^\infty$ and $\delta>0$ we define
\[
J_\delta(\und{x},\und{z})=\{n\ge 0:\rho(x_n,z_n)\ge \delta\}.
\]
Let $\chi_\delta$ denote the characteristic function of $J_\delta(\und{x},\und{z})$.
Recall that $\diam X=1$, hence
we have
\[
\delta\cdot\chi_\delta(n)\le \rho(x_n,z_n)\le 
\chi_\delta(n)+\delta.
\]
Summing the above from $n=0$ to $N-1$, dividing by $N$ and passing with $N$ to infinity we get
\begin{equation}\label{star}
\delta\cdot \dbar(J_\delta(\und{x},\und{z}))\le D_B(\und{x},\und{z})\le 
\dbar(J_\delta(\und{x},\und{z}))+\delta.
\end{equation}

Note also that
\begin{equation}\label{starstar}
D'_B(\und{x},\und{z})\leq \eps\text{ if and only if }\dbar(J_\eps(\und{x},\und{z}))<\eps.
\end{equation}

Fix $\eps>0$. We claim that if $\delta<\eps/ 2$, 
then $D'_B(\und{x},\und{z})<\delta$ implies $D_B(\und{x},\und{z})<\eps$.
To prove the claim assume that $D'_B(\und{x},\und{z})< \eps/2$ 
and use \eqref{starstar} to see that
the right hand side of \eqref{star} is strictly smaller than $\eps$. This proves our claim and yields uniform continuity of $\textrm{id}\colon (X^\infty, D_B')\to (X^\infty,D_B)$.

We now show that $\textrm{id}\colon (X^\infty, D_B)\to (X^\infty,D_B')$ is uniformly continuous. Fix $\eps>0$ and $\delta<\eps^2$.
Take any pair $\und{x},\und{z}\in X^\infty$ such that $D_B(\und{x},\und{z})<\delta$. Then by the first inequality in \eqref{star} and the definition of $\delta$ we see that
\[
\eps\cdot \dbar(J_\eps(\und{x},\und{z})) \le D_B(\und{x},\und{z})<\eps^2.
\]
Therefore $\dbar(J_\eps(\und{x},\und{z}))<\eps$, which implies that  $D'_B(\und{x},\und{z})\leq \eps$ by \eqref{starstar}.
\end{proof}

Lemma \ref{lem:db-prim} yields immediately the following corollary (for a proof it is enough to note that if $\rho$ and $\tilde\rho$ are uniformly equivalent, then so are $D'_B$ and $\tilde D'_B$).
\begin{cor}\label{cor:equiv_m}
Let $\tilde\rho$ be another compatible metric on $X$ and $\tilde D_B$ and $\tilde D'_B$ be defined as above with $\tilde\rho$  in place of $\rho$. Then
$\tilde D_B$ and $D_B$ are uniformly equivalent on $X^\infty$.
\end{cor}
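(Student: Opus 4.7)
The plan is to exploit the hint in parentheses: prove directly that $D'_B$ and $\tilde D'_B$ are uniformly equivalent, and then chain this with Lemma~\ref{lem:db-prim} applied twice (once for $\rho$ and once for $\tilde\rho$) to obtain uniform equivalence of $D_B$ and $\tilde D_B$. The first ingredient is the standard fact that $\rho$ and $\tilde\rho$, being compatible with the same compact topology on $X$, are uniformly equivalent: the identity $\mathrm{id}\colon(X,\rho)\to(X,\tilde\rho)$ is a continuous bijection between compact Hausdorff spaces, hence a homeomorphism that is uniformly continuous in both directions.

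The main step is to show that $D'_B$ and $\tilde D'_B$ are uniformly equivalent. Fix $\eps>0$ and, using uniform continuity of $\mathrm{id}\colon(X,\rho)\to(X,\tilde\rho)$, choose $\eta>0$ with $\eta\le\eps$ such that $\rho(x,y)<\eta$ implies $\tilde\rho(x,y)<\eps$ for all $x,y\in X$. Then for any $\und{x},\und{z}\in X^\infty$,
\[
\{n\ge 0:\tilde\rho(x_n,z_n)\ge\eps\}\subseteq\{n\ge 0:\rho(x_n,z_n)\ge\eta\}.
\]
Taking upper asymptotic densities and using $\eta\le\eps$, whenever $D'_B(\und{x},\und{z})<\eta$ we deduce
\[
\dbar(\{n\ge 0:\tilde\rho(x_n,z_n)\ge\eps\})\le\dbar(\{n\ge 0:\rho(x_n,z_n)\ge\eta\})<\eta\le\eps,
\]
so by the characterization \eqref{starstar} (applied with $\tilde\rho$) we get $\tilde D'_B(\und{x},\und{z})\le\eps$. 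This proves uniform continuity of $\mathrm{id}\colon(X^\infty,D'_B)\to(X^\infty,\tilde D'_B)$, and the symmetric argument exchanging the roles of $\rho$ and $\tilde\rho$ gives uniform continuity of the inverse.

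To conclude, Lemma~\ref{lem:db-prim} (applied once to $\rho$ and once to $\tilde\rho$) yields uniform equivalence of $D_B$ with $D'_B$ and of $\tilde D'_B$ with $\tilde D_B$. Composing the three uniform equivalences
\[
(X^\infty,D_B)\sim(X^\infty,D'_B)\sim(X^\infty,\tilde D'_B)\sim(X^\infty,\tilde D_B)
\]
delivers the desired uniform equivalence of $D_B$ and $\tilde D_B$ on $X^\infty$. The argument is essentially a diagram chase, so there is no serious obstacle; the only point requiring mild care is choosing $\eta\le\eps$, which is what allows the density threshold built into the definition of $D'_B$ to be transported across the change of metric.
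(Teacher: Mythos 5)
Your proof is correct and follows exactly the route the paper intends: the paper's ``proof'' of Corollary~\ref{cor:equiv_m} is the one-line remark that uniform equivalence of $\rho$ and $\tilde\rho$ (automatic on a compact space) passes to $D'_B$ and $\tilde D'_B$, after which Lemma~\ref{lem:db-prim} is applied twice and the three equivalences are chained. You have merely written out the inclusion $\{n:\tilde\rho(x_n,z_n)\ge\eps\}\subseteq\{n:\rho(x_n,z_n)\ge\eta\}$ and the density estimate that the paper leaves to the reader, so no further comment is needed.
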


We will now prove that if $(X,\sigma)$ is a shift space (symbolic dynamical system),
then the Besicovitch pseudometric is uniformly equivalent with the $\dbar$-pseudometric
measuring the upper asymptotic density of the set of coordinates at which two symbolic sequences differ.
This is probably a folklore, but for completeness we present a proof inspired by \cite{DI}.

Let $(X,T)$ be a dynamical systems and $x\in X$. We define $\und{x}_T$ to be the sequence $x, T(x), T^2(x),\ldots\in X^\infty$.
Given a continuous function $f\colon X\to\R$ we define
\[
f(\und{x}_T)=(f(x),f(T(x)),\ldots,f(T^n(x)),\ldots)\in \R^\infty
\]
and for any family $\mathcal{F}$ containing continuous functions from $X$ to $\mathbb{R}$ we set
\[
D^{\mathcal{F}}_B(x,x')=\sup_{f\in\mathcal{F}} D_B(f(\und{x}_T),f(\und{x}'_T)).
\]
\begin{thm}\label{thm:familyF}
Let $(X,T)$ be a dynamical system and let $\mathcal{F}$ be a uniformy equicontinuous and uniformly bounded family of real-valued functions on $X$ such that
$\mathcal{F}_T=\{f\circ T^j : f\in\mathcal{F},\, j\ge 0\}$ separates the points of $X$. Then the pseudometrics $D_B$ and $D^\mathcal{F}_B$ are uniformly equivalent on $X$.
\end{thm}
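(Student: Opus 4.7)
The plan is to prove uniform equivalence in the two directions separately, with the forward direction an easy application of uniform equicontinuity and the reverse direction built on a compactness argument using the separation hypothesis on $\mathcal{F}_T$. Throughout, it will be convenient to work with the pseudometric $D'_B$ introduced in Lemma~\ref{lem:db-prim} and its analogue for scalar sequences, so that distances are controlled by upper densities of ``bad index'' sets. Let $M$ be a uniform bound for $|f|$ over $f\in\mathcal{F}$.

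For the direction $D_B \Rightarrow D^{\mathcal{F}}_B$, I would fix $\eps>0$ and use uniform equicontinuity of $\mathcal F$ to pick $\delta_0>0$ such that $\rho(a,b)<\delta_0$ forces $|f(a)-f(b)|<\eps/2$ for every $f\in\mathcal F$. For any $f\in\mathcal F$ and $j\ge 0$, the sequences $f(\und x_T)$ and $f(\und x'_T)$ differ at index $n$ by at most $\eps/2$ unless $\rho(T^n x, T^n x')\ge \delta_0$, in which case the difference is at most $2M$. Averaging gives $D_B(f(\und x_T),f(\und x'_T))\le \eps/2 + 2M\,\dbar(J_{\delta_0}(\und x_T,\und x'_T))$; since $f\circ T^j$ satisfies the same equicontinuity estimate (the shift is irrelevant in the $\limsup$), the same bound applies to every element of $\mathcal F_T$. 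Using the right-hand side of \eqref{star} from Lemma~\ref{lem:db-prim} to translate a small $D_B$ into small $\dbar(J_{\delta_0})$ produces the desired uniform estimate.

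For the direction $D^{\mathcal{F}}_B\Rightarrow D_B$, the main step is the following ``uniform separation'' lemma: for every $\eps>0$ there exist $g_1,\dots,g_k\in\mathcal F_T$ and $\eta>0$ such that $\rho(y,y')\ge \eps$ implies $\max_i|g_i(y)-g_i(y')|\ge \eta$. To prove this I would observe that $K_\eps=\{(y,y')\in X\times X:\rho(y,y')\ge\eps\}$ is compact, that each map $(y,y')\mapsto|g(y)-g(y')|$ is continuous, and that since $\mathcal F_T$ separates points the open sets $\{|g(\cdot)-g(\cdot)|>1/n\}$ for $g\in\mathcal F_T$, $n\in\N$, cover $K_\eps$. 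A finite subcover yields the claim. The remaining arithmetic is routine: if $D^{\mathcal{F}}_B(x,x')<\delta$, then applying the scalar version of the left-hand side of \eqref{star} to each $g_i$ (with the constant $\eta$ in place of $\delta$ and bound $2M$) shows $\eta\,\dbar(A_i)\le D_B(g_i(\und x_T),g_i(\und x'_T))\le \delta$, where $A_i=\{n:|g_i(T^n x)-g_i(T^n x')|\ge \eta\}$; since the uniform separation lemma forces $J_\eps(\und x_T,\und x'_T)\subseteq \bigcup_i A_i$, one gets $\dbar(J_\eps)\le k\delta/\eta$. Choosing $\delta$ small enough makes this less than $\eps$, so $D'_B(x,x')\le\eps$, and Lemma~\ref{lem:db-prim} delivers the desired bound on $D_B$.

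The main obstacle is the ``uniform separation'' lemma in paragraph three; everything else is bookkeeping with the estimates in Lemma~\ref{lem:db-prim}. The pointwise separation hypothesis on $\mathcal F_T$ must be upgraded to a uniform lower bound on a compact set of pairs, and this is exactly where compactness of $X$ (hence of $X\times X$ and of $K_\eps$) is essential. Note also that $\mathcal F_T$ is used rather than $\mathcal F$ because the shifted functions $f\circ T^j$ are needed to separate pairs $(y,y')$ that may only be distinguished by looking at later iterates; the proof above accommodates this automatically since each $g_i\in\mathcal F_T$ is of the form $f_i\circ T^{j_i}$ and shifting indices by a constant does not affect upper density.
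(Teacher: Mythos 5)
Your proposal is correct, and for one of the two directions it takes a genuinely different route from the paper. The direction ``small $D_B$ implies small $D^{\mathcal F}_B$'' is handled the same way in both places (the paper only sketches it, deferring to uniform equicontinuity and Lemma~\ref{lem:db-prim}; your paragraph two fills in exactly those details --- though note that extracting smallness of $\dbar(J_{\delta_0})$ from smallness of $D_B$ uses the \emph{left} inequality of \eqref{star}, not the right one as you wrote; you cite the correct one in the other direction). The interesting divergence is in the direction ``small $D^{\mathcal F}_B$ implies small $D_B$''. The paper extracts a \emph{countable} separating subfamily $\{f_n\}\subset\mathcal F_T$, forms the auxiliary metric $\rho'(y,z)=\sum_n a_n|f_n(y)-f_n(z)|$, invokes Corollary~\ref{cor:equiv_m} to replace $D_B$ by the Besicovitch pseudometric of $\rho'$, and then estimates by truncating the series. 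You instead prove a \emph{finite} uniform separation lemma: compactness of $K_\eps=\{(y,y'):\rho(y,y')\ge\eps\}$ upgrades pointwise separation by $\mathcal F_T$ to $\max_{i\le k}|g_i(y)-g_i(y')|\ge\eta$ on $K_\eps$, after which $J_\eps(\und{x}_T,\und{x}'_T)\subset\bigcup_i A_i$ and subadditivity of $\dbar$ finish the job via \eqref{starstar}. Both arguments hinge on compactness of $X$ and on the shift-invariance of $D_B$ (needed because the separating functions live in $\mathcal F_T$ while $D^{\mathcal F}_B$ is a supremum over $\mathcal F$ only --- you address this correctly). Your version is more quantitative and self-contained, yielding an explicit modulus $\delta\le\eps\eta/k$ and avoiding the auxiliary metric and Corollary~\ref{cor:equiv_m}; the paper's version is shorter on the page because it reuses machinery already established. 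Either is a complete proof.
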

\begin{proof}
Using compactness it is easy to see that if the family $\mathcal{F}_T$ separates the points of $X$, then there exists a sequence $\{f_n\}_{n=1}^\infty\subset \mathcal{F}_T$ separating the points of $X$, where $f_n=g_n\circ T^{k(n)}$ for some $k(n)\ge 0$ and $g_n\in \mathcal{F}$.
Let $x,x'\in X$ and $n\ge 1$. Define $z=T^{k(n)}(x)$ and $z'=T^{k(n)}(x')$.
Note that
\[
D_B(f_n(\und{x}_T),f_n(\und{x}'_T))=
D_B(g_n(\und{z}_T),g_n(\und{z}'_T))=D_B(g_n(\und{x}_T),g_n(\und{x}'_T)).
\]
Therefore
\begin{align*}
\sup_{n\ge 1} D_B(f_n(\und{x}_T),f_n(\und{x}'_T))&=\sup_{n\ge 1} D_B(g_n(\und{x}_T),g_n(\und{x}'_T))\\
&\le D^{\mathcal{F}}_B(x,x')=\sup_{f\in\mathcal{F}} D_B(f(\und{x}_T),f(\und{x}'_T)).
\end{align*}
Take any sequence of weights $\{a_n\}_{n=1}^\infty\subset$ such that
\[
0<a_n<1 \quad\text{and}\quad\sum_{n=1}^\infty a_n ||f_n||_\infty<\infty,
\]
where $||\cdot||_\infty$ denotes the sup norm. For $y,z\in X$ define
\[
\rho'(y,z)=\sum_{n=1}^\infty a_n |f_n(y)-f_n(z)|.
\]
Note that $\rho'$ is a metric on $X$ equivalent to $\rho$ because $f_n$'s are equicontinuous and separate the points of $X$.

Fix $\eps>0$. Find $m\in\N$ such that
\[
\sum_{n=m+1}^\infty a_n ||f_n||_\infty<\eps/2.
\]
Let $x,x'\in X$ be such that $D^{\mathcal{F}}_B(x,x')<\eps/(2m)$.
By Corollary \ref{cor:equiv_m} the Besicovitch pseudometric $D'_B$ associated with $\rho'$ is uniformly equivalent to $D_B$.
Therefore it is enough to check that $D_B'(x,x')<\eps$. We have
\begin{align*}
D'_B(x,x')&=\limsup_{N\to\infty}\frac{1}{N}\sum_{i=0}^{N-1}\rho'(T^i(x),T^i(x'))
\\&=\limsup_{N\to\infty}\frac{1}{N}\sum_{i=0}^{N-1}\sum_{n=1}^\infty a_n |f_n(T^i(x))-f_n(T^i(x'))|
 \\&\le\limsup_{N\to\infty}\frac{1}{N}\sum_{i=0}^{N-1}\sum_{n=1}^m \big(a_n |f_n(T^i(x))-f_n(T^i(x'))|+\eps/2\big)
 \\&=\eps/2 + \limsup_{N\to\infty}\frac{1}{N}\sum_{i=0}^{N-1}\sum_{n=1}^m a_n |f_n(T^i(x))-f_n(T^i(x'))|
 \\&\le\eps/2 + \sum_{n=1}^m a_n \left(\limsup_{N\to\infty}\frac{1}{N}\sum_{i=0}^{N-1}|f_n(T^i(x))-f_n(T^i(x'))|\right)<\eps.
\end{align*}
The last inequality holds because we took $x,x'\in X$ with $D^{\mathcal{F}}_B(x,x')<\eps/(2m)$. This proves that
\[
\textrm{id}\colon (X, D^\mathcal{F}_B)\to (X,D_B)\]
is uniformly continuous.

To prove that
\[
\textrm{id}\colon (X, D_B)\to (X,D^\mathcal{F}_B)\]
is uniformly continuous one can follow the reasoning as in the proof of Lemma \ref{lem:db-prim} to conclude that thanks to the uniform boundedness of $\mathcal{F}$ it is enough to show that for every $\eps>0$ there exists $\delta>0$ such that if $x,x'\in X$ satisfy $D_B(x,x')<\delta$ then for every $f\in\mathcal{F}$ we have
\[
\dbar(\{n: \rho(f(T^n(x)),f(T^n(x')))\ge \eps\})<\eps.
\]
The latter easily follows from uniform equicontinuity of $\mathcal{F}$ and Lemma \ref{lem:db-prim}.
\end{proof}
Observe that any finite family $\mathcal{F}$ is uniformly equicontinuous and uniformly bounded.
In particular, if $X$ is the space of sequences over a finite alphabet and $T$ is the shift map, then taking $\mathcal{F}=\{\iota_0\}$, where $\iota_0(x_0x_1x_2\ldots)=x_0$ yields the following corollary.
\begin{cor}\label{cor:uniformlyequiv}
If $\mathcal{A}$ is a finite set with discrete topology, then the Besicovitch pseudometric associated with any admissible metric on
the product space $\mathcal{A}^\infty$ and the
\emph{$\dbar$-pseudometric} on $\mathcal{A}^\infty$ given by
\[
\dbar(x,x')=\dbar(\{n\ge 0 : x_n\neq x'_n\})=\limsup_{n\to\infty}\frac{\#\{0\le j<n : x_j\neq x'_j\}}{n}
\]
are uniformly equivalent.
\end{cor}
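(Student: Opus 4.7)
The plan is to invoke Theorem~\ref{thm:familyF} with a carefully chosen finite family of indicator functions, and then verify by a direct combinatorial argument that the resulting pseudometric $D_B^{\mathcal{F}}$ is Lipschitz equivalent to $\dbar$.

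First, by Corollary~\ref{cor:equiv_m}, the Besicovitch pseudometric is, up to uniform equivalence, independent of the choice of admissible metric on $\mathcal{A}^\infty$, so it suffices to work with any one $\rho$. I would take $\mathcal{F}=\{\mathbf{1}_{[a]}:a\in\mathcal{A}\}$, the indicator functions of the one-symbol cylinders $[a]=\{x\in\mathcal{A}^\infty:x_0=a\}$. Since cylinders are clopen in the product topology, each $\mathbf{1}_{[a]}$ is continuous, and the family is finite with values in $\{0,1\}$, hence uniformly equicontinuous and uniformly bounded. Moreover $\mathcal{F}_\sigma=\{\mathbf{1}_{[a]}\circ\sigma^j:a\in\mathcal{A},\,j\ge 0\}$ separates points of $\mathcal{A}^\infty$: if $x\neq x'$, pick $j$ with $x_j\neq x'_j$ and note that $\mathbf{1}_{[x_j]}\circ\sigma^j$ takes the value $1$ at $x$ and $0$ at $x'$. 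Theorem~\ref{thm:familyF} therefore yields that $D_B$ and $D_B^{\mathcal{F}}$ are uniformly equivalent on $\mathcal{A}^\infty$.

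Next, I would compare $D_B^{\mathcal{F}}$ with $\dbar$ directly. For each $a\in\mathcal{A}$ and each $n\ge 0$, the quantity $|\mathbf{1}_{[a]}(\sigma^n x)-\mathbf{1}_{[a]}(\sigma^n x')|$ equals $1$ exactly when precisely one of $x_n,x'_n$ is $a$, and equals $0$ otherwise. Two elementary bookkeeping identities follow: this summand is bounded by $\mathbf{1}_{\{x_n\neq x'_n\}}$; and when summed over $a\in\mathcal{A}$ it equals exactly $2\cdot\mathbf{1}_{\{x_n\neq x'_n\}}$, because at each differing position precisely the two indicators corresponding to $a=x_n$ and $a=x'_n$ contribute. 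Averaging over $n\in\{0,\ldots,N-1\}$, letting $N\to\infty$ and using subadditivity of $\limsup$ (applied to the finite sum over $a$), I would obtain
\[
D_B^{\mathcal{F}}(x,x')\;\le\;\dbar(x,x')\;\le\;\tfrac{1}{2}\sum_{a\in\mathcal{A}}D_B(\mathbf{1}_{[a]}(\und{x}_\sigma),\mathbf{1}_{[a]}(\und{x}'_\sigma))\;\le\;\tfrac{|\mathcal{A}|}{2}\,D_B^{\mathcal{F}}(x,x').
\]
This is in fact Lipschitz equivalence between $D_B^{\mathcal{F}}$ and $\dbar$, and combined with the previous paragraph it finishes the proof.

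I do not anticipate a serious obstacle. The only step requiring a moment of care is the correct direction of the limsup inequality $\limsup\sum_{a}\le\sum_{a}\limsup$ used in deducing the lower bound for $\dbar$; this is immediate because $\mathcal{A}$ is finite and each term is nonnegative. All other steps are verifications of the hypotheses of Theorem~\ref{thm:familyF} or elementary counting.
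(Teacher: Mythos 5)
Your proof is correct and follows essentially the same route as the paper: both apply Theorem~\ref{thm:familyF} to a finite (hence automatically uniformly bounded and equicontinuous) family whose shifted copies separate points, and then identify $D^{\mathcal{F}}_B$ with $\dbar$ up to Lipschitz constants by elementary counting. The only difference is the choice of family --- the paper takes the single symbol-reading function $\iota_0$, whereas you take the $\#\mathcal{A}$ indicators of one-symbol cylinders --- which changes nothing of substance, though your version has the small virtue of not needing to embed the alphabet into $\R$.
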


\section{Besicovitch Pseudometric and invariant measures}\label{sec:Bes-inv}

We are going to show that the map $\underline{x}\mapsto \om{\underline{x}}$ is a uniformly continuous map from $X^\infty$ equipped with $D_B$ to the space of nonempty compact subsets of $\M(X)$ equipped with the Hausdorff metric.

\begin{lem}\label{lem:ciagloscbesic}
For every $\eps>0$ there is $\delta>0$ such that if $\underline{x},\underline{x}'\in X^{\infty}$ and $\bbd{\underline x}{\underline x'}<\delta$, then
$\di(\m(\und{x},n),\m(\und{x}',n))\le \eps$ for all sufficiently large $n$.
\end{lem}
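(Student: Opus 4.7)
My plan is to reduce the claim about the Prokhorov distance of empirical measures to a statement about the upper density of indices at which the two sequences differ by at least $\eps$, and then invoke Lemma \ref{lem:db-prim} to control that density via $D_B$. The bridge is the following very elementary observation about empirical measures: if two finite sequences of length $n$ in $X$ agree (or rather, are within $\eps$) at a fraction of at least $1-\eps$ of the indices, then their empirical measures are within $\eps$ in the Prokhorov distance.

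First I would fix $\eps>0$, apply Lemma \ref{lem:db-prim} to choose $\delta>0$ such that $D_B(\underline x,\underline x')<\delta$ implies $D'_B(\underline x,\underline x')<\eps$, and hence by the definition of $D'_B$
\[
\dbar\bigl(J_\eps(\underline x,\underline x')\bigr)<\eps,\qquad J_\eps(\underline x,\underline x')=\{n\ge 0:\rho(x_n,x_n')\ge\eps\}.
\]
By the definition of upper density there is $N_0$ such that for every $n\ge N_0$
\[
\frac{1}{n}\#\bigl(J_\eps(\underline x,\underline x')\cap\{0,1,\ldots,n-1\}\bigr)<\eps.
\]

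Next I would fix an arbitrary Borel set $B\subset X$ and $n\ge N_0$, and split the indices $0\le i<n$ with $x_i\in B$ into those lying in $J_\eps(\underline x,\underline x')$ and those not; in the latter case $\rho(x_i,x_i')<\eps$ forces $x_i'\in B^\eps$. This yields
\[
\m(\underline x,n)(B)\le\m(\underline x',n)(B^\eps)+\eps,
\]
and by the symmetric argument the same estimate holds with the roles of $\underline x$ and $\underline x'$ exchanged. Since $B$ was arbitrary, this gives $\di(\m(\underline x,n),\m(\underline x',n))\le\eps$ for every $n\ge N_0$, as required.

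There is no real obstacle; the only point that requires a moment of attention is to remember that a bound on the upper asymptotic density of $J_\eps(\underline x,\underline x')$ translates into a finite-stage bound on the counting function only for sufficiently large $n$, which is exactly the quantifier ``for all sufficiently large $n$'' present in the statement of the lemma. The asymmetry in the paper's definition of the Prokhorov metric $\di$ is harmless because the index set estimate above is symmetric in $\underline x$ and $\underline x'$.
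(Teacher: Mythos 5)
Your proposal is correct and follows essentially the same route as the paper: both reduce the claim via Lemma \ref{lem:db-prim} to the bound $\#\{0\le j<n:\rho(x_j,x_j')\ge\eps\}<n\eps$ for large $n$, and then compare $\m(\und{x},n)(B)$ with $\m(\und{x}',n)(B^{\eps})$ by splitting indices according to whether $\rho(x_j,x_j')\ge\eps$. Your explicit remark about symmetry is a welcome (if minor) addition to the paper's terser write-up.
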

\begin{proof}Fix $\eps>0$.
By Lemma \ref{lem:db-prim} there exist $\delta>0$ such that if $\underline{x},\underline{x}'\in X^{\infty}$ and $\bbd{\underline x}{\underline x'}<\delta$, then one can find $N\in\N$ such that for every $n\geq N$ one has
\[
	\#\big\{0\leq j< n\,:\,\rho(x_j,x_j')\geq \eps\big\}<n\cdot \eps.
\]
Take $n\ge N$ and a Borel set $B\subset X$. We have
	\begin{align*}
		\emp{x}{n}(B)=\frac{1}{n}\#\big\{0\leq j< n \,:\,x_j\in B\big\}
&\le \m(\underline x',n)(B^{\eps})+\eps.
	\end{align*}
	Hence $\di\big(\m(\underline x,n),\m(\underline x',n)\big)\leq\eps$ for all $n\ge N$.
\end{proof}
\begin{thm}\label{thm:ciagloscbesic}
	For every $\eps>0$ there is $\delta>0$ such that for all $\underline{x},\underline{x}'\in X^{\infty}$ with $\bbd{\underline x}{
		\underline x'}<\delta$ one has $H_{\mathcal{M}(X)}(\om{\underline x},\om{\underline x'})<\eps$.
\end{thm}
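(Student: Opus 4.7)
The plan is to derive the statement directly from Lemma~\ref{lem:ciagloscbesic} via a standard compactness plus subsequence extraction. Given $\eps>0$, I first pick $\eps'\in(0,\eps)$ and apply Lemma~\ref{lem:ciagloscbesic} to obtain $\delta>0$ such that whenever $\underline{x},\underline{x}'\in X^\infty$ satisfy $\bbd{\underline{x}}{\underline{x}'}<\delta$, one has $\di(\m(\underline{x},n),\m(\underline{x}',n))\le \eps'$ for all $n$ large enough, say $n\ge N_0$.

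I then show that $\om{\underline{x}}\subset\om{\underline{x}'}^{\eps}$ in the Prokhorov metric. Fix an arbitrary $\mu\in\om{\underline{x}}$ and write $\mu=\lim_k\m(\underline{x},n_k)$ along some subsequence $n_k\to\infty$. By weak$^\ast$ compactness of $\M(X)$, the sequence $\m(\underline{x}',n_k)$ has a convergent subsequence; passing to it, I may assume $\m(\underline{x}',n_k)\to \mu'$ for some $\mu'\in\M(X)$. Since $n_k\to\infty$, the limit $\mu'$ is itself a distribution measure of $\underline{x}'$, that is $\mu'\in\om{\underline{x}'}$. The Prokhorov metric is continuous with respect to weak$^\ast$ convergence in both arguments, so passing to the limit in $\di(\m(\underline{x},n_k),\m(\underline{x}',n_k))\le\eps'$ for $n_k\ge N_0$ gives $\di(\mu,\mu')\le \eps'<\eps$, hence $\mu\in\om{\underline{x}'}^{\eps}$.

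The symmetric argument, interchanging the roles of $\underline{x}$ and $\underline{x}'$, yields $\om{\underline{x}'}\subset\om{\underline{x}}^\eps$. Combining both inclusions gives $H_{\M(X)}(\om{\underline{x}},\om{\underline{x}'})<\eps$, which is exactly the required conclusion.

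I do not anticipate any substantial obstacle: the heavy lifting has already been done in Lemma~\ref{lem:ciagloscbesic}, which couples the two empirical sequences term by term for large $n$; the present theorem is essentially the statement that a uniform term-by-term control of two sequences implies a Hausdorff control of their accumulation sets. The only slightly delicate point is to ensure that one can extract a single index subsequence along which both empirical measures converge, which is routinely handled by passing to a subsubsequence and is available because $\M(X)$ is weak$^\ast$ compact and metrizable.
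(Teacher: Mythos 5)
Your proposal is correct and follows essentially the same route as the paper: both fix $\mu\in\om{\underline{x}}$, extract a common subsequence along which $\m(\underline{x}',n_k)$ converges to some $\mu'\in\om{\underline{x}'}$, and transfer the bound from Lemma~\ref{lem:ciagloscbesic} to the limits (the paper does this via an explicit $\eps/3$ triangle-inequality argument, you via continuity of the Prokhorov metric, which is the same thing). No gaps.
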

\begin{proof} Fix $\eps>0$. Use Lemma \ref{lem:ciagloscbesic} to find $\delta$ for $\eps/3$.
Take $\und{x},\und{x}'\in X^\infty$ such that $D_B(\und{x},\und{x}')<\delta$.
Let $\mu\in\hat\omega(\und{x})$. Then $\mu=\lim\limits_{k\to\infty}\{\emp{x}{n_k}\}_{k=0}^{\infty}$ for some strictly increasing sequence $\seq{n}{k}\in\N^{\infty}$.  Let $\nu$ be an accumulation point of the sequence $\{\mathfrak m(\underline x',n_k)\}_{k=0}^{\infty}$.
Taking a subsequence we can assume that
\[
\lim\limits_{k\to\infty}\{\mathfrak m(\underline x',n_k)\}_{k=0}^{\infty}=\nu.
\]
By Lemma \ref{lem:ciagloscbesic} we have $\di(\emp{x}{n_k},\mathfrak m(\underline x',n_k))\le \eps/3$ provided that $k$ is large enough.
Let $k$ be sufficiently large to guarantee also that
$\di(\mu,\emp{x}{n_k})<\eps/3$ and $\di(\nu,\mathfrak m(\underline x',n_k))<\eps/3$.
Consequently,
	$$\di(\mu,\nu)\leq\di(\mu,\emp{x}{n_k})+\di(\emp{x}{n_k},\mathfrak m(\underline x',n_k))+\di(\mathfrak m(\underline x',n_k),\nu)<\eps.$$
Hence we have
\[
\dist\left(\mu, \om{\underline x'}\right)=\min\left\{\di(\mu,\nu)\,:\,\nu\in\om{\underline x'}\right\}<\eps.
\]
Therefore $\max\left\{\dist(\mu, \om{\underline x'})\,:\,\mu\in\om{\underline x}\right\}<\eps$.
 By the same argument we also have $\max\left\{\dist(\nu, \om{\underline x})\,:\,\nu\in\om{\underline x'}\right\}<\eps$ and hence $H_{\mathcal M(X)}\big(\om{\underline x},\om{\underline x'}\big)<\eps$.
\end{proof}
\begin{cor}\label{cor:D_B-zero}Let $\und{x},\und{x}'\in X^\infty$.
	If $\bbd{\underline x}{\underline x'}=0$, then $\om{\underline x}=\om{\underline x'}$.
\end{cor}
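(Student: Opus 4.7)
The statement follows essentially for free from Theorem~\ref{thm:ciagloscbesic}, so my plan is to just peel off the quantifiers. The idea is that the Hausdorff distance on $2^{\mathcal{M}(X)}$ is a genuine metric (it separates distinct nonempty compact sets), and the theorem gives uniform continuity of $\underline{x}\mapsto\hat\omega(\underline{x})$ with respect to $D_B$, so a pair of sequences at $D_B$-distance zero must be mapped to the same compact set of distribution measures.

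In more detail, the plan is as follows. Fix an arbitrary $\varepsilon>0$ and apply Theorem~\ref{thm:ciagloscbesic} to obtain a corresponding $\delta>0$. Since by hypothesis $D_B(\underline{x},\underline{x}')=0<\delta$, the theorem immediately yields $H_{\mathcal{M}(X)}(\hat\omega(\underline{x}),\hat\omega(\underline{x}'))<\varepsilon$. Because $\varepsilon>0$ was arbitrary, $H_{\mathcal{M}(X)}(\hat\omega(\underline{x}),\hat\omega(\underline{x}'))=0$. Recall (as noted in Section~1 of the paper) that $\hat\omega(\underline{x})$ and $\hat\omega(\underline{x}')$ are nonempty compact subsets of $\mathcal{M}(X)$; hence they belong to $2^{\mathcal{M}(X)}$, on which $H_{\mathcal{M}(X)}$ is a metric. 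Consequently vanishing of the Hausdorff distance forces $\hat\omega(\underline{x})=\hat\omega(\underline{x}')$.

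There is really no obstacle here: the entire content has been front-loaded into Theorem~\ref{thm:ciagloscbesic}. The only thing one needs to verify to make the argument airtight is that $H_{\mathcal{M}(X)}$ really is a metric (not merely a pseudometric) on the collection of nonempty compact subsets of $\mathcal{M}(X)$, but that is standard and was already invoked implicitly in the statement of the theorem.
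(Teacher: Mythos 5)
Your argument is correct and is exactly the intended deduction: the paper states this as an immediate corollary of Theorem~\ref{thm:ciagloscbesic} without writing out a proof, and your quantifier-peeling plus the observation that $H_{\mathcal{M}(X)}$ is a genuine metric on nonempty compact subsets (with $\hat\omega(\underline{x})$ compact and nonempty, as noted in Section~1) fills in precisely what was left implicit.
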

\begin{rem}
It is easy to see that if $\s{x}\in X^{\infty}$ is a $\delta$-average-pseudo-orbit for $T$, then $\bbd{T(\underline x)}{\sigma(\underline x)}<\delta$.
\end{rem}
For any continuous map $T\colon X\to X$ let $\hat T\colon\mathcal M(X)\to\mathcal M(X)$ be given by the formula $\hat T(\mu)=\mu\circ T^{-1}$.
The proof of the following fact can be found in \cite[Proposition 3.2]{DGSbook} (see also \cite[Theorem~6.7]{Walters}).
\begin{thm}
	The map $\hat T\colon \mathcal M(X)\to\mathcal M(X)$ is continuous. Additionally, $\hat T$ is surjective if and only if $T$ is surjective.
\end{thm}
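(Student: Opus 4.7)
The plan is to split the statement into three parts: continuity of $\hat T$, the implication ``$\hat T$ surjective $\Rightarrow$ $T$ surjective,'' and its converse.

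For the continuity, I would use the change of variables formula
\[
\int_X f\,d(\hat T\mu)=\int_X f\,d(\mu\circ T^{-1})=\int_X (f\circ T)\,d\mu,
\]
valid for any bounded Borel $f$. Take a net (or sequence, since $\M(X)$ is metrizable) $\mu_n\to\mu$ in the weak$^\ast$ topology. For any continuous $f\colon X\to\R$, the composition $f\circ T$ is continuous because $T$ is, so by the definition of weak$^\ast$ convergence
\[
\int_X f\,d(\hat T\mu_n)=\int_X (f\circ T)\,d\mu_n\longrightarrow \int_X (f\circ T)\,d\mu=\int_X f\,d(\hat T\mu).
\]
Since this holds for every continuous $f$, $\hat T\mu_n\to \hat T\mu$ weak$^\ast$, establishing continuity.

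For the direction ``$\hat T$ surjective $\Rightarrow$ $T$ surjective,'' I would argue by contrapositive. Assume $T$ is not surjective. Since $X$ is compact and $T$ continuous, $T(X)$ is a closed proper subset of $X$, so there exist $x_0\in X\setminus T(X)$ and an open set $U$ with $x_0\in U$ and $U\cap T(X)=\emptyset$. Then for every $\mu\in\M(X)$,
\[
\hat T(\mu)(U)=\mu(T^{-1}(U))=\mu(\emptyset)=0,
\]
while $\hat\delta(x_0)(U)=1$. Thus $\hat\delta(x_0)\notin \hat T(\M(X))$, so $\hat T$ is not surjective.

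For the converse, suppose $T$ is surjective. First observe that every Dirac measure lies in the image: given $y\in X$, surjectivity provides $x\in T^{-1}(y)$, and $\hat T(\hat\delta(x))=\hat\delta(y)$. By linearity of $\mu\mapsto \mu\circ T^{-1}$, any finite convex combination $\sum_{i=1}^n a_i\hat\delta(y_i)$ is the image of $\sum_{i=1}^n a_i\hat\delta(x_i)$ for any choice of $x_i\in T^{-1}(y_i)$. Since such convex combinations of Diracs are weak$^\ast$ dense in $\M(X)$, the image $\hat T(\M(X))$ is dense. But $\M(X)$ is compact in the weak$^\ast$ topology and $\hat T$ is continuous, so $\hat T(\M(X))$ is compact, hence closed, hence equal to $\M(X)$. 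The main thing to be careful about is ensuring the setup is really metrizable compact (it is, via the Prokhorov metric $\di$) so that sequences and compactness arguments work as expected; beyond that, no step is a serious obstacle.
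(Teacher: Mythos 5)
Your proof is correct. Note that the paper does not actually prove this statement --- it only cites \cite[Proposition 3.2]{DGSbook} and \cite[Theorem 6.7]{Walters} --- so there is no in-paper argument to compare against; what you have written is essentially the standard textbook proof. All three steps are sound: continuity follows from the change-of-variables identity $\int f\,d(\hat T\mu)=\int (f\circ T)\,d\mu$ together with continuity of $f\circ T$; the contrapositive for ``$\hat T$ surjective $\Rightarrow$ $T$ surjective'' correctly exploits that $T(X)$ is closed, so $T^{-1}(U)=\emptyset$ for a suitable open $U$; and the converse via density of convex combinations of Dirac measures (which one can justify by Krein--Milman, the Diracs being the extreme points of $\mathcal M(X)$, or by direct approximation) combined with compactness of $\hat T(\mathcal M(X))$ is a clean way to upgrade density of the image to equality.
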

\begin{cor}\label{cor:im:51}
	For every $\eps>0$ there is $\delta>0$ such that if $\s{x}\in X^\infty$ fulfills $D_B(T(\und{x}),\sigma(\und{x}))<\delta$, 
then for every $\mu\in\om{\underline x}$ one has $\di(\mu,\hat T(\mu))<\eps$.
\end{cor}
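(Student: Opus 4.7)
The plan is to combine three ingredients: (i)~applying $\hat T$ to an empirical measure of $\und{x}$ produces precisely the empirical measure of $T(\und{x})$; (ii)~the empirical measures of $\und{x}$ and of its shift $\sigma(\und{x})$ differ by a signed measure of total variation at most $2/n$; and (iii)~Lemma~\ref{lem:ciagloscbesic} already translates smallness of $D_B(T(\und{x}),\sigma(\und{x}))$ into smallness of $\di(\m(T(\und{x}),n),\m(\sigma(\und{x}),n))$. Putting these together shows that $\hat T(\m(\und{x},n))$ is close to $\m(\und{x},n)$ for all sufficiently large $n$, and then one passes to a subsequence producing $\mu$.

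More concretely, I would first invoke Lemma~\ref{lem:ciagloscbesic} with $\eps/2$ in place of $\eps$ to fix $\delta>0$ such that $D_B(T(\und{x}),\sigma(\und{x}))<\delta$ forces $\di(\m(T(\und{x}),n),\m(\sigma(\und{x}),n))\le \eps/2$ for all sufficiently large $n$. Observation~(i) is the simple identity
\[
\hat T(\m(\und{x},n))=\frac{1}{n}\sum_{i=0}^{n-1}\hat\delta(T(x_i))=\m(T(\und{x}),n),
\]
and (ii) follows from the equally direct identity $\m(\sigma(\und{x}),n)-\m(\und{x},n)=\tfrac{1}{n}(\hat\delta(x_n)-\hat\delta(x_0))$, which in particular yields $\di(\m(\sigma(\und{x}),n),\m(\und{x},n))\le 2/n$.

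To finish, fix $\mu\in\om{\und{x}}$ and choose a subsequence $n_k\to\infty$ with $\m(\und{x},n_k)\to\mu$ in the weak$^\ast$ topology. The continuity of $\hat T$ (the theorem immediately preceding the corollary) together with~(i) gives $\m(T(\und{x}),n_k)=\hat T(\m(\und{x},n_k))\to \hat T(\mu)$, while (ii) gives $\m(\sigma(\und{x}),n_k)\to \mu$. Chaining these convergences through the triangle inequality for $\di$ applied to the three measures $\m(T(\und{x}),n_k)$, $\m(\sigma(\und{x}),n_k)$, and $\m(\und{x},n_k)$, and sending $k\to\infty$, produces
\[
\di(\mu,\hat T(\mu))\le \limsup_{k\to\infty}\di(\m(T(\und{x}),n_k),\m(\sigma(\und{x}),n_k))\le \eps/2<\eps,
\]
which is the desired bound. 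The only nontrivial input is Lemma~\ref{lem:ciagloscbesic}; the rest is the identification of $\hat T$ with a natural operation on empirical measures, a boundary $O(1/n)$ correction, and a weak$^\ast$ continuity argument, so I do not anticipate any real obstacle.
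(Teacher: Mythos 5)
Your proposal is correct and follows essentially the same route as the paper's proof: the identity $\hat T(\m(\und{x},n))=\m(T(\und{x}),n)$, the $2/n$ bound between $\m(\und{x},n)$ and $\m(\sigma(\und{x}),n)$, Lemma~\ref{lem:ciagloscbesic} applied with $\eps/2$, continuity of $\hat T$, and a passage to the limit along a subsequence realizing $\mu$. There is nothing to add.
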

\begin{proof}
Let $\mu\in\om{\underline x}$ and $\m(\underline x, k_n)\to \mu$ as $n\to\infty$, where $\{k_n\}_{n=0}^\infty$ is a strictly increasing sequence of nonnegative integers. For every Borel set $B$ and every $x\in X$ one has $\hat\delta(x)\big(T^{-1}(B)\big)=\hat\delta\big(T(x)\big)(B)$. Consequently,
	\begin{align*}		\hat T\big(\m(\underline x, n)\big)(B)=\m(\underline x, n)\left(T^{-1}(B)\right)&= \frac 1n\sum_{j=0}^{n-1}\hat{\delta}(x_j)\left(T^{-1}(B)\right)\\
		&=\frac 1n\sum_{j=0}^{n-1}\hat\delta\big( T(x_j)\big)(B)=\m\big(T(\underline x), n\big)(B).
	\end{align*}
	Because of the continuity of $\hat T$ one has
\begin{equation}\label{eq:raz}
\hat T(\mu)=\hat T\big(\lim\limits_{n\to\infty}\m(\underline x, k_n)\big)=\lim\limits_{n\to\infty}\hat T\big(\m(\underline x, k_n)\big)=\lim\limits_{n\to\infty}\m\big(T(\underline x),k_n\big).
\end{equation}
Note that $\di\big(\m(\underline x,n),\m(\sigma(\underline x),n)\big)\leq 2/n$ holds for every $n\in\N$. Hence
\begin{equation}\label{eq:dwa}
\mu=\lim\limits_{n\to\infty}\m(\sigma(\underline x),k_n).
\end{equation}
To finish the proof fix $\eps>0$ and use Lemma~\ref{lem:ciagloscbesic} to pick $\delta>0$ for $\eps/2$. Then
$D_B\big(\sigma(\underline x), T(\underline x)\big)<\delta$
implies $\di(\m(\sigma(\underline x),k_n),\m(T(\underline x),k_n))\le \eps/2$ for all sufficiently large $n$.
Therefore $\di\big(\mu, \hat{T}(\mu)\big)<\eps$ by \eqref{eq:raz} and \eqref{eq:dwa}.
\end{proof}
\begin{rem}
	A sequence $\s{x}$ is an asymptotic average pseudo orbit if and only if $\bbd{T(\underline x)}{\sigma(\underline x)}=0$.
\end{rem}
\begin{cor}
	If $\s{x}\in X^{\infty}$ is an  asymptotic average pseudo orbit, then $\om{\underline x}\subset\mathcal M_T(X)$.
\end{cor}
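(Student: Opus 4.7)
The plan is to chain the two preceding results. By the immediately preceding remark, the asymptotic average pseudo orbit hypothesis on $\underline{x}$ is equivalent to $D_B(T(\underline x),\sigma(\underline x))=0$, so in particular $D_B(T(\underline x),\sigma(\underline x))<\delta$ for every $\delta>0$.

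Next, I would fix an arbitrary $\mu\in\hat\omega(\underline x)$ and an arbitrary $\eps>0$, and apply Corollary~\ref{cor:im:51}: for the given $\eps$ it supplies some $\delta>0$ with the property that whenever $D_B(T(\underline x),\sigma(\underline x))<\delta$, one has $\di(\mu,\hat T(\mu))<\eps$. Since our sequence satisfies $D_B(T(\underline x),\sigma(\underline x))=0<\delta$, we conclude $\di(\mu,\hat T(\mu))<\eps$ for every $\eps>0$, so $\di(\mu,\hat T(\mu))=0$, and hence $\mu=\hat T(\mu)$. This is exactly the $T$-invariance of $\mu$, so $\mu\in\mathcal M_T(X)$. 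Since $\mu\in\hat\omega(\underline x)$ was arbitrary, $\hat\omega(\underline x)\subset\mathcal M_T(X)$.

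There is essentially no obstacle: both non-trivial pieces of work have already been done, the equivalence in the remark being immediate from the definitions, and the quantitative estimate being Corollary~\ref{cor:im:51}. The only small point to verify is that $\di$ is a genuine metric on $\mathcal M(X)$ so that $\di(\mu,\hat T(\mu))=0$ forces $\mu=\hat T(\mu)$, which is standard (the Prokhorov metric metrizes the weak$^\ast$ topology, which is Hausdorff).
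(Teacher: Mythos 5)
Your proposal is correct and follows exactly the paper's own argument: the paper likewise notes that $\bbd{T(\underline x)}{\sigma(\underline x)}=0$ and then applies Corollary~\ref{cor:im:51} for every $\eps>0$ to conclude $\di(\mu,\hat T(\mu))\leq\eps$, hence $\mu=\hat T(\mu)$. Nothing is missing.
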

\begin{proof}
	Since $\bbd{T(\underline x)}{\sigma(\underline x)}=0$ it follows from Corollary~\ref{cor:im:51} that for every $\eps>0$ and every $\mu\in\om{\underline x}$ one has $\di(\mu, \hat T(\mu))\leq\eps$. Hence $\mu=\hat T(\mu)$ and so $\mu\in\mathcal M_T(X)$.
\end{proof}
 We finish this section with a yet another application of the Besicovitch pseudometric. Although we will not use this result we add it here since we belive it is of independent interest. Note that Weiss \cite{WeissBook} proves the analog of Theorem \ref{thm:generic-dbar-closed} for symbolic system and convergence with respect to the $\dbar$-pseudometric. By Corollary \ref{cor:uniformlyequiv} Weiss' result follows from our theorem.

 \begin{thm}[\cite{Oxtoby52}, Section~4]\label{thm:Oxtoby}
Let $x\in X$ be generic for some $\mu\in\M_T(X)$. Then the following conditions are equivalent:
\begin{enumerate}
\item $\mu\in\M_T^e(X)$,
\item For all $f\in\mathcal{C}(X)$ and $\alpha>0$
$$\dbar\bigg(\bigg\{n\,:\,\bigg|\frac{1}{k}\sum_{j=0}^{k-1}f\big(T^{n+j}(x)\big)-\int_Xf\text{d}\mu\bigg|>\alpha\bigg\}\bigg)\to 0\text{ as }k\to\infty.$$
\end{enumerate}
\end{thm}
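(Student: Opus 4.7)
The plan is to reduce both implications to statements about the continuous observables
\[
F_k(y) := \frac{1}{k}\sum_{j=0}^{k-1} f(T^j y), \qquad k\geq 1,
\]
and to interface orbital densities with $\mu$-measure via the genericity of $x$. Writing $c=\int f\,d\mu$, I would work, for each $\alpha>0$, with the closed set $\bar A_{k,\alpha}:=\{y\in X:|F_k(y)-c|\geq\alpha\}$. Since $F_k$ is continuous and $\m_T(x,N)\to\mu$ in the weak-$\ast$ topology (by genericity), the Portmanteau theorem supplies the upper-density bound
\[
\dbar\bigl(\{n:T^n x\in \bar A_{k,\alpha}\}\bigr)\leq\mu(\bar A_{k,\alpha}).
\]

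For $(1)\Rightarrow(2)$, I would invoke the mean ergodic theorem: ergodicity of $\mu$ gives $F_k\to c$ in $L^2(\mu)$, hence Chebyshev's inequality yields $\mu(\bar A_{k,\alpha})\to 0$ as $k\to\infty$ for every fixed $\alpha>0$. Combined with the display above and the inclusion $\{|F_k-c|>\alpha\}\subset \bar A_{k,\alpha}$, this is precisely condition (2).

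For $(2)\Rightarrow(1)$, I would apply the genericity of $x$ to the continuous bounded function $(F_k-c)^2$, which gives
\[
\int(F_k-c)^2\,d\mu = \lim_{N\to\infty}\frac{1}{N}\sum_{n=0}^{N-1}\bigl(F_k(T^n x)-c\bigr)^2.
\]
Splitting the right-hand sum according to whether $|F_k(T^n x)-c|$ exceeds $\alpha$ and using $|F_k-c|\leq 2\|f\|_\infty$ would yield
\[
\int(F_k-c)^2\,d\mu \leq \alpha^2 + 4\|f\|_\infty^2\,\dbar\bigl(\{n:|F_k(T^n x)-c|>\alpha\}\bigr).
\]
By (2) the second term vanishes as $k\to\infty$ for each fixed $\alpha>0$, and letting $\alpha\to 0$ afterwards forces $F_k\to c$ in $L^2(\mu)$. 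Comparing with the $L^2$-version of Birkhoff's theorem $F_k\to E_\mu[f\mid\mathcal{I}]$ in $L^2(\mu)$, where $\mathcal{I}$ denotes the $\sigma$-algebra of $T$-invariant sets, we obtain $E_\mu[f\mid\mathcal{I}]=\int f\,d\mu$ for every $f\in\mathcal{C}(X)$. Density of $\mathcal{C}(X)$ in $L^2(\mu)$ then forces $\mathcal{I}$ to be trivial modulo $\mu$, hence $\mu$ is ergodic.

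The only subtlety worth flagging is the mismatch between the strict inequality defining the open set in (2) and the closed-set upper bound produced by Portmanteau; passing to the closed hull only enlarges the bad set, which is the harmless direction in both implications. Beyond that, the argument is a routine interplay of the mean ergodic theorem, Chebyshev's inequality, and the Portmanteau characterization of weak-$\ast$ convergence, with the continuity of $F_k$ serving as the bridge between orbital density statements and $\mu$-integrals.
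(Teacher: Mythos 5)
The paper does not prove this statement at all: it is imported verbatim from Oxtoby's 1952 paper \emph{Ergodic sets} (Section 4) and used as a black box in the proof of Theorem \ref{thm:generic-dbar-closed}, so there is no internal argument to compare yours against. That said, your proof is correct and self-contained. The Portmanteau bound $\dbar(\{n: T^nx\in C\})=\limsup_N \m_T(x,N)(C)\le \mu(C)$ for closed $C$ is the right bridge between orbital densities and $\mu$-measure, and you use it only in the direction where replacing the open condition $|F_k-c|>\alpha$ by the closed one $|F_k-c|\ge\alpha$ enlarges the set, which you correctly flag as harmless. The forward implication via the mean ergodic theorem plus Chebyshev is clean. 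For the converse, the identity $\int (F_k-c)^2\,d\mu=\lim_N \frac1N\sum_{n<N}(F_k(T^nx)-c)^2$ is a legitimate application of genericity to the continuous observable $(F_k-c)^2$, the splitting of the sum gives the stated bound with $\limsup$ on the right, and the conclusion $F_k\to c$ in $L^2(\mu)$ combined with von Neumann's theorem forces $E_\mu[f\mid\mathcal I]=\int f\,d\mu$ for all continuous $f$; density of $\mathcal C(X)$ in $L^2(\mu)$ and continuity of the conditional expectation then make $\mathcal I$ trivial, as you say. This is essentially the modern functional-analytic rendering of Oxtoby's argument, and nothing in it needs repair.
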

\begin{thm}\label{thm:generic-dbar-closed}
Let $\{\mu_p\}_{p=1}^{\infty}\subset\mathcal M_T^e(X)$ and $\{x_p\}_{p=1}^{\infty}\subset X^{\infty}$ be such that $x_p$ is a generic point for $\mu_p$ for every $p\in\N$. If $x\in X$ is such that $D_B(x_p, x)\to 0$ as $p\to\infty$, then $x$ is generic for  some $\mu\in\M_T^e(X)$.
\end{thm}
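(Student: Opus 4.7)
The proof has two parts: first, showing that $x$ is generic for some $\mu\in\M_T(X)$ using the uniform continuity of $\underline{x}\mapsto \om{\underline{x}}$ from Theorem~\ref{thm:ciagloscbesic}; second, proving that $\mu$ is ergodic via the characterization in Theorem~\ref{thm:Oxtoby}.

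For the genericity part, $D_B(x_p,x)\to 0$ forces $\{x_p\}_{p=1}^\infty$ to be Cauchy with respect to $D_B$, so by Theorem~\ref{thm:ciagloscbesic} the singletons $\om{x_p}=\{\mu_p\}$ form a Hausdorff-Cauchy sequence, hence $\{\mu_p\}_{p=1}^\infty$ is Cauchy in $\M(X)$ and converges to some $\mu$. Closedness of $\M_T(X)$ gives $\mu\in \M_T(X)$. Applying Theorem~\ref{thm:ciagloscbesic} to the pair $(x_p,x)$ and letting $p\to\infty$ yields $\om{x}=\{\mu\}$, which is precisely the statement that $x$ is generic for $\mu$.

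To prove ergodicity, fix $f\in\mathcal{C}(X)$ and $\alpha,\eps>0$. Writing $A_k(y,f)=\tfrac{1}{k}\sum_{j=0}^{k-1}f(T^j(y))$, the task is to show that $\dbar\{n:|A_k(T^n(x),f)-\int f\,d\mu|>\alpha\}<\eps$ for every sufficiently large $k$. Split the difference via the triangle inequality into (A) $|A_k(T^n(x),f)-A_k(T^n(x_p),f)|$, (B) $|A_k(T^n(x_p),f)-\int f\,d\mu_p|$ and (C) $|\int f\,d(\mu_p-\mu)|$. For $p$ large, (C) becomes smaller than $\alpha/3$ by weak convergence $\mu_p\to\mu$, and Theorem~\ref{thm:Oxtoby} applied to the ergodic measure $\mu_p$ and its generic point $x_p$ gives that, for all $k$ large enough, the upper density of those $n$ for which (B) exceeds $\alpha/3$ is below $\eps/2$.

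The main obstacle is bounding (A) uniformly in $k$. Choose $\eta>0$ by uniform continuity of $f$ so that $\rho(y,z)<\eta$ implies $|f(y)-f(z)|<\alpha/6$, and set $J=\{j\ge 0:\rho(T^j(x),T^j(x_p))\ge\eta\}$. Then each summand defining (A) is at most $\alpha/6$ outside $J$ and at most $2\|f\|_\infty$ on $J$, so (A) exceeding $\alpha/3$ forces $|J\cap[n,n+k-1]|/k>\alpha/(12\|f\|_\infty)$. A double-counting argument yields $\limsup_N N^{-1}\sum_{n=0}^{N-1}|J\cap[n,n+k-1]|\le k\dbar(J)$, and Markov's inequality then bounds the upper density of such $n$ by $12\|f\|_\infty\dbar(J)/\alpha$ \emph{uniformly in $k$}. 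Lemma~\ref{lem:db-prim} lets us enlarge $p$ so that $\dbar(J)=\dbar(J_\eta(x,x_p))$ is as small as desired, in particular below $\alpha\eps/(24\|f\|_\infty)$, which drives the upper density of the bad set for (A) below $\eps/2$. Combining the bounds for (A) and (B) proves that $\mu$ is ergodic and finishes the proof.
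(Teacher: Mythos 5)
Your proposal is correct and follows essentially the same route as the paper: genericity via the continuity of $\und{x}\mapsto\om{\und{x}}$ (Theorem~\ref{thm:ciagloscbesic}), ergodicity via Oxtoby's criterion (Theorem~\ref{thm:Oxtoby}) with the same three-term triangle-inequality decomposition, and the same window-counting estimate linking the failure of term (A) to the upper density of the set of times where the orbits of $x$ and $x_p$ are $\eta$-separated. The only difference is presentational: you run that counting argument directly as a Markov-type bound uniform in $k$, whereas the paper phrases the identical estimate as a proof by contradiction against $D_B(x_p,x)\to 0$.
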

\begin{proof}
By Theorem~\ref{thm:ciagloscbesic} the function $(X^{\infty}, D_B)\ni \und{x}\to\om{\und{x}}\in\left(\M(X), H_{\M(X)}\right)$ is continuous, and so there exists $\mu\in\mathcal\M_T(X)$ such that $\mu_p\to\mu$ as $p\to\infty$ and $x\in\Gen(\mu)$. We will show that $\mu\in\M_T^e(X)$. To this end we are going to show that the condition from Theorem \ref{thm:Oxtoby}(2) is fulfilled. Fix $f\in\mathcal{C}(X)$.
Notice that for every $k,n,p\in\mathbb{N}$ we have
\begin{multline*}
\left|\frac{1}{k}\sum_{j=0}^{k-1}f\big(T^{n+j}(x)\big)-\int_Xf\text{d}\mu\right|
\leq\left|\frac{1}{k}\sum_{j=0}^{k-1}f\big(T^{n+j}(x)\big)-\frac{1}{k}\sum_{j=0}^{k-1}f\big(T^{n+j}(x_p)\big)\right|\\
+\left|\frac{1}{k}\sum_{j=0}^{k-1}f\big(T^{n+j}(x_p)\big)-\int_Xf\text{d}\mu_p\right|+\left|\int_Xf\text{d}\mu_p-\int_Xf\text{d}\mu\right|.
\end{multline*}
Take any $\alpha>0$.
Let $P'\in\N$ be such that for every $p\ge P'$ we have
\[
\left|\int_Xf\text{d}\mu_p-\int_Xf\text{d}\mu\right|\le \alpha/3.
\]
Then for every $p\ge P'$ and any $k$ we have
\begin{multline*}
\bigg\{n\in\N_0\,:\,\bigg|\frac{1}{k}\sum_{j=0}^{k-1}f\big(T^{n+j}(x)\big)-\int_Xf\text{d}\mu\bigg|>\alpha\bigg\}
\subset\\
\bigg\{n\in\N_0\,:\,\frac{1}{k}\sum_{j=0}^{k-1}\bigg|f\big(T^{n+j}(x)\big)-f\big(T^{n+j}(x_p)\big)\bigg|>\alpha/3\bigg\}
\cup\\
\bigg\{n\in\N_0\,:\,\bigg|\frac{1}{k}\sum_{j=0}^{k-1}f\big(T^{n+j}(x_p)\big)-\int_Xf\text{d}\mu_p\bigg|>\alpha/3\bigg\}.
\end{multline*}
Therefore it is enough to prove that for any $\eps>0$ we can find $p\ge P'$ and $K\in\N$ such that for all $k\ge K$ the following inequalities hold
\begin{align*}
\dbar\Bigg(\bigg\{n\in\N_0\,:\,\frac{1}{k}\sum_{j=0}^{k-1}\bigg|f\big(T^{n+j}(x)\big)-f\big(T^{n+j}(x_p)\big)\bigg|>\alpha/3\bigg\}\Bigg)<\eps/2,\\
\dbar\Bigg(\bigg\{n\in\N_0\,:\,\bigg|\frac{1}{k}\sum_{j=0}^{k-1}f\big(T^{n+j}(x_p)\big)-\int_Xf\text{d}\mu_p\bigg|>\alpha/3\bigg\}\Bigg)<\eps/2.
\end{align*}
Note that the second inequality above holds for each $p$ and all sufficiently large $k$ by Theorem \ref{thm:Oxtoby}. So assume that the first inequality
fails for some parameter $\eps$, that is, there exists $\eps_0>0$ such that for all $p\ge P'$ and every $K\in\N$ there is $k_0\ge K$ with
\begin{equation}\label{eq:inequality}
\dbar\Bigg(\bigg\{n\in\N_0\,:\,\frac{1}{k_0}\sum_{j=0}^{k_0-1}\bigg|f\big(T^{n+j}(x)\big)-f\big(T^{n+j}(x_p)\big)\bigg|>\alpha/3\bigg\}\Bigg)\ge\eps_0/2.
\end{equation}
Let $\delta>0$ be such that $x',x''\in X$ and $\rho(x',x'')<\delta$ imply $|f(x')-f(x'')|<\alpha/6$. We are going to show that \eqref{eq:inequality} implies that there is a constant $\gamma>0$ such that for all $p\ge P'$ we have
\begin{equation}\label{eq:contra}
\dbar\bigg(\Big\{n\in\N_0\,:\,\rho\big(T^{n}(x),T^{n}(x_p)\big)\ge \delta\Big\}\bigg)\ge\gamma.
\end{equation}
But the inequality \eqref{eq:contra} cannot hold for all $p$, because it contradicts $D_B(x_p,x)\to 0$ as $p\to \infty$. Hence it remains to prove that \eqref{eq:inequality} implies \eqref{eq:contra}. Let $k_0$ be one of those $k$'s for which \eqref{eq:inequality} holds and let
\[
B(n)=\bigg\{0\le m <n\,:\,\frac{1}{k_0}\sum_{j=0}^{k_0-1}\bigg|f\big(T^{m+j}(x)\big)-f\big(T^{m+j}(x_p)\big)\bigg|>\alpha/3\bigg\}.
\]
It follows from \eqref{eq:inequality} that for each $N_0$ there exists $n\ge N_0$ such that
$\#B(n)\ge n\eps_0/4$. For each $m\in B(n)$ it is easy to see that
\[
\#\bigg\{0\le j <k_0\,:\,\bigg|f\big(T^{m+j}(x)\big)-f\big(T^{m+j}(x_p)\big)\bigg|>\alpha/6\bigg\}\ge \frac{\alpha k_0}{12||f||_\infty}.
\]
Therefore for infinitely many $n$ we have
\[
\#\bigg\{0\le m <n+k_0\,:\,\rho\big(T^{m}(x),T^{m}(x_p)\big)>\delta\bigg\}\ge \frac{\#B(n)}{k_0}\cdot \frac{\alpha k_0}{12||f||_\infty}\ge\frac{n\eps_0\alpha }{48||f||_\infty}.
\]
But the above inequality clearly yields \eqref{eq:contra}.
\end{proof}

\section{Weak specification implies the asymptotic average shadowing property}\label{weakspec}
In \cite{KKO} it was proved that a surjective system with the almost specification property satisfies the asymptotic average shadowing property. On the other hand it follows from \cite{KOR} that the notions of almost specification and weak specification are independent, that is there exist: a system with the weak specification property which does not have the almost specification property and a system satisfying the almost specification property without the weak specification property. In this section we will show that weak specification also implies the asymptotic average shadowing property.

Recall that $g \colon \N\times(0,\eps_0)\to\N_0$, where $\eps_0>0$, is a \emph{mistake function} if for all $\eps<\eps_0$  and all
$n \in\N$  we have $g(n, \eps) \le g(n + 1, \eps)$ and
$g(n, \eps)/n\to 0$ as $n\to \infty$.
Given a mistake function $g$ we define a function $k_g\colon (0,\infty) \to \N$ by
$k_g(\eps)=\min\{n\in\N\,:\,g(m,\eps)<m\eps\text{ for all  }m\ge n\}$.
For $0<\eps<\eps_0$ and $n\ge k_g(\eps)$ large
enough for the inequality $g(n, \eps) < n$ to hold
we set
\[
I(g; n, \eps) := \{\Lambda\subset \set{0,1,\ldots, n - 1} : \#\Lambda \ge n-g(n,\eps)\}.
\]

Given $x\in X$, $\eps>0$, $n\in\N$ and a mistake function $g$ we define a \emph{Bowen ball} by
$$B_n(x,\eps)=\big\{y\in X\,:\,\rho\big(T^j(x), T^j(y)\big)<\eps\text{ for every }0\leq j<n\big\}$$
and a \emph{Bowen ball with mistakes} by$$B_n(g;x,\eps)=\big\{y\in X\,:\,\{0\leq j<n\,:\,\rho(T^j(x),T^j(y))<\eps\}\in I(g,n,\eps)\big\}.$$

We also need the following:

\begin{lem}[\cite{KLO}, Theorem 17]\label{lem:3.2}
Let $(X,T)$ be a dynamical system. If $T$ is surjective and has the weak specification property, then $T$ is mixing.
\end{lem}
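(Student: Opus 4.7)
The plan is to establish mixing by proving that for each pair of non-empty open sets $U, V \subset X$ there is $N \in \N$ with $T^n(U) \cap V \neq \emptyset$ for every $n \ge N$. The idea is to feed the weak specification property a two-segment specification whose first singleton segment sits in $U$ at time $0$ and whose second singleton segment is, for each large $n$, a $T^n$-preimage of a designated point of $V$ placed at time $n$.

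Concretely, I would choose $p \in U$, $q \in V$, and $\eps > 0$ so small that $\overline{B(p,\eps)} \subset U$ and $\overline{B(q,\eps)} \subset V$. Let $M_\eps$ be the function provided by weak specification and set $N := M_\eps(1)$. For each $n \ge N$, surjectivity of $T$ supplies $z_n \in X$ with $T^n(z_n) = q$, and we form the orbit family $\xi_n = \{T^{[0,0]}(p), T^{[n,n]}(z_n)\}$. Both segments have length one, so the spacing condition reduces to $n = a_2 - b_1 \ge M_\eps(b_2 - a_2 + 1) = M_\eps(1) = N$, which holds. Hence $\xi_n$ is an $M_\eps$-spaced specification.

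Weak specification then produces $w_n \in X$ that $\eps$-traces $\xi_n$: the tracing condition at $k = 0$ gives $\rho(w_n, p) \le \eps$, hence $w_n \in \overline{B(p,\eps)} \subset U$, while the condition at $k = n$ gives $\rho(T^n(w_n), q) \le \eps$, hence $T^n(w_n) \in \overline{B(q,\eps)} \subset V$. This shows $T^n(w_n) \in T^n(U) \cap V$, so $T^n(U) \cap V \neq \emptyset$ for every $n \ge N$, proving that $T$ is mixing.

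There is no substantive obstacle to this argument; the key observation is simply that because the gap requirement in the definition of an $N$-spaced specification depends on the length of the \emph{upcoming} segment, using singleton orbit segments forces the needed gap to be the fixed constant $M_\eps(1)$, independent of $n$. Surjectivity is used only to extract the $T^n$-preimages $z_n$; without it, the range $T^n(X)$ might miss a neighbourhood of $q$ and the whole construction would collapse.
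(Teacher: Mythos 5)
Your proof is correct. Note that the paper does not actually prove this lemma---it is quoted from \cite{KLO} (Theorem 17)---but your argument is the standard one for deriving mixing from a specification-type property: two singleton orbit segments, the first pinned at a point of $U$ at time $0$ and the second at a $T^n$-preimage of a point of $V$ at time $n$, with the key observation that the required gap $M_\eps(b_2-a_2+1)=M_\eps(1)$ is a constant independent of $n$ precisely because the spacing condition depends only on the length of the upcoming segment. The only cosmetic point is that the tracing condition $\rho(w_n,p)\le\eps$ places $w_n$ in the \emph{closed} ball of radius $\eps$, which can be slightly larger than $\overline{B(p,\eps)}$; choosing $\eps$ with $B(p,2\eps)\subset U$ and $B(q,2\eps)\subset V$ removes even that quibble.
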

\begin{lem}\label{lem:3.4}
Let $(X,T)$ be a surjective dynamical system with the weak specification property and $M_\eps(n)$ is the constant associated to $\eps>0$ and $n\in\mathbb{N}$ in the definition of the weak specification property. Assume we are given
\begin{enumerate}
\item an~increasing sequence of integers $\{\alpha_j\}_{j=1}^{\infty}\subset\N$ with $\alpha_1=1$,
\item a~sequence of positive real numbers $\{\eps_i\}_{i=1}^{\infty}$ such that for every $i\in\N$ one has $$\eps_{\alpha_i}=\eps_{\alpha_i+1}=\ldots=\eps_{\alpha_{i+1}-1}=2\cdot\eps_{\alpha_{i+1}},$$
\item a~sequence of points $\{x_i\}_{i=1}^{\infty}\subset X$,
\item a~sequence of integers $\{n_i\}_{i=0}^{\infty}$ satisfying $n_0=0$, $M_{\eps_i/2}(n_i)\leq n_i$ and $$n_{\alpha_i}=n_{\alpha_i+1}=\ldots=n_{\alpha_{i+1}-1}\text{ for any }i\in\N.$$
\end{enumerate}
Then, setting $$l_j=\sum_{s=0}^{j-1}n_s\text{ for }j\in\N$$
we can find $z\in X$ such that for every $j\in\N$ we have
\[
T^{l_j}(z)\in\overline{B_{n_j}(g;x_j, \eps_j)},
\]
$\text{where }g\colon \N\times(0,1)\to\N \text{ is given by }g(n,\eps)=M_{\eps/2}(n)$.
\end{lem}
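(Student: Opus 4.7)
The plan is to construct $z$ as a subsequential limit of a sequence $\{z_N\}_{N\ge 1}\subset X$, where each $z_N$ is produced by a single application of the weak specification property and satisfies the desired tracking for the first $N$ Bowen balls with mistakes. Compactness of $X$, continuity of $T^{l_j}$, and closedness of the targets $\overline{B_{n_j}(g;x_j,\eps_j)}$ will then promote this finite information to the conclusion for every $j\in\N$.

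For fixed $N$, I would first use the surjectivity of $T$ (which follows from Lemma~\ref{lem:3.2} together with the fact that mixing systems are onto) to select, for each $j\le N$, a preimage $y_j\in T^{-l_j}(\{x_j\})$, so that $T^{l_j+i}(y_j)=T^i(x_j)$ for every $i\ge 0$. I would then form a finite specification of orbit segments of $y_1,\ldots,y_N$ living inside the windows $[l_j,l_{j+1})$ and invoke the weak specification property at precision $\eps_N/2$. The tracked subsegment inside the $j$-th window should be strictly shorter than the window itself, with the untracked slack playing two simultaneous roles: it must provide the $M_{\eps_N/2}$-spacing demanded by weak specification between consecutive segments, and it must also fit inside the per-window mistake budget $g(n_j,\eps_j)=M_{\eps_j/2}(n_j)$ of the Bowen ball with mistakes. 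Because $\eps_N/2<\eps_j$ for every $j\le N$, the $\eps_N/2$-tracking on the subinterval automatically yields strict $\eps_j$-closeness to $T^i(x_j)$ there, and the number of uncovered coordinates is at most the length of the untracked part; hence $T^{l_j}(z_N)\in B_{n_j}(g;x_j,\eps_j)$.

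The main obstacle is the tension between the two roles of the slack. The $M_{\eps_N/2}$-spacing needed by weak specification at the finest precision in play can a priori exceed the coarser-block mistake budget $M_{\eps_j/2}(n_j)$ whenever $j$ sits in a block strictly earlier than the block containing $N$, because $\eps_N\le\eps_j$ typically forces $M_{\eps_N/2}\ge M_{\eps_j/2}$. Resolving this is the technical heart of the proof, and I would handle it by exploiting the block-constant structure of $\{n_i\}$ and $\{\eps_i\}$ (with $\eps$ halving between blocks), the standing hypothesis $M_{\eps_i/2}(n_i)\le n_i$, and the freedom to leave untracked coordinates at \emph{both} ends of each window. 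This allows the required spacing to be distributed across the boundary of two adjacent windows rather than crammed into a single one, keeping every individual mistake budget intact.

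Finally, by compactness I would extract a convergent subsequence $z_{N_k}\to z$ in $X$. For every fixed $j$ one eventually has $N_k\ge j$, so $T^{l_j}(z_{N_k})\in B_{n_j}(g;x_j,\eps_j)$; continuity of $T^{l_j}$ then places $T^{l_j}(z)\in\overline{B_{n_j}(g;x_j,\eps_j)}$, completing the proof.
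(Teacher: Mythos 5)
Your overall architecture (finite-stage approximants $z_N$, then a limit via compactness, continuity of $T^{l_j}$ and closedness of $\overline{B_{n_j}(g;x_j,\eps_j)}$) is sound, and you correctly locate the difficulty; but the step you defer to ``the technical heart'' is exactly where the argument breaks, and the fix you sketch cannot work. If $z_N$ is produced by a \emph{single} application of weak specification at precision $\eps_N/2$, then the gap required before the segment lying in window $j$ is at least $M_{\eps_N/2}(m_j)\ge M_{\eps_N/2}(1)$, where $m_j\ge 1$ is the length of that segment (shortening the segment does not help, since $M_{\eps}(\cdot)$ is nondecreasing). Because $\eps_{\alpha_{i+1}}=\eps_{\alpha_i}/2$, we have $\eps_N\to 0$, and nothing in the hypotheses bounds $M_{\eps_N/2}(1)$ as $N\to\infty$: the assumption $M_{\eps_i/2}(n_i)\le n_i$ only controls the spacing function at the \emph{matched} precision of each block. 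Meanwhile the mistake budget of window $j$ is the fixed finite number $g(n_j,\eps_j)=M_{\eps_j/2}(n_j)$. For fixed $j$ and large $N$, the single contiguous block of untracked coordinates demanded before segment $j$ eventually exceeds the combined budgets of any fixed collection of preceding windows, so however you distribute it across window boundaries some window receives more bad coordinates than its budget allows, and $T^{l_j}(z_N)\notin B_{n_j}(g;x_j,\eps_j)$. Distributing slack at both ends of a window redistributes a bounded deficit; here the deficit is unbounded in $N$.

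The paper avoids this by an inductive construction rather than a one-shot specification. Having built $z_k$, the next specification consists of a \emph{single long initial orbit segment of $z_k$ itself} --- which already encodes the tracking of all earlier windows, each achieved at its own matched precision --- followed only by the orbit segments of the new block, and it is traced at precision $\eps_{\alpha_{k+1}}/2$. Hence only one new gap is ever introduced, of size $M_{\eps_{\alpha_{k+1}}/2}(n_{\alpha_{k+1}})\le n_{\alpha_{k+1}}$, charged to a window whose budget matches it by hypothesis (4). The price is that every stage perturbs all earlier windows by an additional $\eps_{\alpha_{k+1}}/2$; the halving condition (2) makes these errors telescope to $\eps_j(1-2^{-(i-s+1)})<\eps_j$, the points $z_k$ form a Cauchy sequence, and the limit lands only in the closure $\overline{B_{n_j}(g;x_j,\eps_j)}$ --- which is precisely why the closure appears in the statement. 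To repair your proof you would need to import this re-tracing idea; the preimage and subsequence scaffolding alone cannot deliver the conclusion.
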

\begin{proof}
We will define inductively a sequence $\{z_n\}_{n=1}^{\infty}\subset X$ such that for every $i\in\N$, every $s=1,2,\ldots, i$ and every $j\in\{\alpha_s,\ldots, \alpha_{s+1}-1\}$ we have
\begin{equation}\label{lem:zj:ws}
T^{l_j+M_{\eps_j/2}(n_j)}(z_i)\in B_{n_j-M_{\eps_j/2}(n_j)}\left(T^{M_{\eps_j/2}(n_j)}(x_j),\eps_j\cdot(1-1/2^{i-s+1})\right).
\end{equation}
Observe that if we put $m=k+M_{\eps}(k)$ for some $k\in \N$ and $\eps>0$ then $M_{\eps}(m)\geq M_{\eps}(k)$.
It follows that \eqref{lem:zj:ws} implies that
\begin{equation}\label{eq:end}
T^{l_j}(z_i)\in B_{n_j}\big(g;x_j,\eps_j\cdot(1-1/2^{i-s+1})\big)\subset\overline{B_{n_j}(g;x_j, \eps_j)}.
\end{equation}

We will now define $z_1$. Take $j\in\{1,2,\ldots,\alpha_2-1\}$ and note that if we set $a_j=l_{j-1}+M_{\eps_1/2}(n_{j})$ and $b_j=l_{j-1}+n_{j}$,
then
\[
\set{T^{[a_j,b_j]}(x_j) : j=1,2,\ldots, \alpha_2-1}
\]
is an $M_{\eps_1/2}(n_1)$-spaced specification.
By the weak specification property, there exists $z_1$ satisfying
$$T^{l_j}(z_1)\in B_{n_j-M_{\eps_1/2}(n_j)}\left(T^{M_{\eps_1/2}(n_j)}(x_j),\eps_1\cdot(1-1/2)\right)$$
for each $j=1,\ldots, \alpha_{2}-1$. Indeed \eqref{lem:zj:ws} holds for $i=1$.

Assume that we have constructed a point $z_k$ satisfying \eqref{lem:zj:ws}. Then the sequence
\begin{equation}
\set{T^{[0,\alpha_{k}]}(z_k)}\cup \set{T^{[a_j,b_j]}(x_j) : j=\alpha_{k+1},\ldots, \alpha_{k+2}-1}
\label{eq:spec:zk}
\end{equation}
where $a_j=l_{j-1}+M_{\eps_{j}/2}(n_{j})$ and $b_j=l_{j-1}+n_{j}$ 
for $j=\alpha_{k+1},\ldots, \alpha_{k+2}-1$
is an $M_{\eps_{\alpha_{k+1}}/2}(n_{\alpha_{k+1}})$-spaced specification.
Using the weak specification property we find a point $z_{k+1}$ which $\eps_{\alpha_{k+1}}/2$-traces the specification given by \eqref{eq:spec:zk}.
Observe that for $s=1,\ldots, k$ and $j=\alpha_s,\ldots, \alpha_{s+1}-1$ we have
\begin{eqnarray*}
\eps_j\cdot(1-1/2^{k-s+1})+\eps_{\alpha_{k+1}}/2&=&\eps_j\cdot(1-1/2^{k-s+2})-\frac{\eps_{\alpha_s}}{2^{k-s+2}}+\eps_{\alpha_{k+1}}/2\\
&=&\eps_j\cdot(1-1/2^{(k+1)-s+1})-\frac{2^{k+1-s}\eps_{\alpha_{k+1}}}{2^{k-s+2}}+\eps_{\alpha_{k+1}}/2\\
&=&\eps_j\cdot(1-1/2^{(k+1)-s+1})
\end{eqnarray*}
hence \eqref{lem:zj:ws} is satisfied also with $i=k+1$.
Finally note that $d(z_n,z_{n+1})<\eps_{\alpha_{n+1}}$ for every $n$, hence $\set{z_n}_{n=1}^\infty$ is a Cauchy sequence.
Putting $z=\lim_{n\to \infty} z_n$ ends the proof by \eqref{eq:end}.
\end{proof}

The next theorem is an analog of \cite[Theorem~3.5]{KKO}. Although the weak specification property and the almost specification property are independent the proof follows the same lines.


\begin{thm}\label{thm:wspec_aasp}
	If a~surjective dynamical system $(X,T)$ has the weak specification property, then it has the asymptotic average shadowing property.
\end{thm}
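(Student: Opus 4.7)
Given an asymptotic average pseudo-orbit $\underline{x}=\{x_n\}_{n=0}^{\infty}$, the goal is to produce $z\in X$ for which
\[
\lim_{N\to\infty}\frac{1}{N}\sum_{n=0}^{N-1}\rho(T^n(z),x_n)=0.
\]
My plan mirrors the strategy used in \cite[Theorem~3.5]{KKO} for the almost specification case: I would prepare a sequence of genuine orbit segments that approximate $\underline{x}$ block by block, and then concatenate them into a single orbit via Lemma~\ref{lem:3.4}. Concretely, fix tolerances $\eps_i=2^{-i}$ and inductively build the increasing sequences $\{\alpha_i\}$ and $\{n_i\}$ demanded by Lemma~\ref{lem:3.4}, with $n_{\alpha_i}$ large enough that $M_{\eps_i/2}(n_{\alpha_i})/n_{\alpha_i}<\eps_i$ (available since the weak specification gap function satisfies $M_\eps(n)/n\to 0$). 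Setting $l_j=\sum_{s<j}n_s$, I would then select, for each $j$, a reference point $x_j\in X$ whose genuine orbit segment $(x_j,T(x_j),\ldots,T^{n_j-1}(x_j))$ approximates $(x_{l_j},\ldots,x_{l_{j+1}-1})$ in average,
\[
\frac{1}{n_j}\sum_{k=0}^{n_j-1}\rho(T^k(x_j),x_{l_j+k})<\eps_j,
\]
and feed these data into Lemma~\ref{lem:3.4}, obtaining $z\in X$ with $T^{l_j}(z)\in\overline{B_{n_j}(g;x_j,\eps_j)}$ for every $j$, where $g(n,\eps)=M_{\eps/2}(n)$.

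A routine block-by-block triangle inequality then closes the Besicovitch estimate. At most $M_{\eps_j/2}(n_j)<\eps_j n_j$ coordinates $k\in\{0,\ldots,n_j-1\}$ can have $\rho(T^{l_j+k}(z),T^k(x_j))\ge\eps_j$, and on those the defect is bounded by $\diam X=1$, so
\begin{align*}
\frac{1}{n_j}\sum_{k=0}^{n_j-1}\rho\big(T^{l_j+k}(z),x_{l_j+k}\big) &\le \frac{1}{n_j}\sum_{k=0}^{n_j-1}\rho\big(T^{l_j+k}(z),T^k(x_j)\big)\\
&\quad+\frac{1}{n_j}\sum_{k=0}^{n_j-1}\rho\big(T^k(x_j),x_{l_j+k}\big)<3\eps_j.
\end{align*}
Since $\eps_j\to 0$ and the blocks exhaust $\N_0$, a standard partial-sum computation yields $D_B(\{T^n(z)\}_{n=0}^{\infty},\underline{x})=0$, i.e.\ $z$ shadows $\underline{x}$ asymptotically in average.

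The hard part is the middle step, namely the choice of the reference points $x_j$. The naive choice $x_j=x_{l_j}$ will not work in general because iterates of $T$ are not equicontinuous and a few large single-step errors in $\underline{x}$ can drive the true orbit of $x_{l_j}$ far from $\underline{x}$ for the rest of the block. My intended remedy is to pick the block lengths $n_j$ \emph{adaptively}: close a block (and restart with a new reference point) as soon as the running average deviation between the current reference orbit and $\underline{x}$ threatens to exceed $\eps_j$. The asymptotic average pseudo-orbit hypothesis, which forces the upper density of the set $\{n : \rho(T(x_n),x_{n+1})\ge\delta\}$ to vanish for every $\delta>0$, guarantees simultaneously that the adaptive block lengths $n_j$ can be chosen long enough to meet the weak specification constraints and that the short ``transition'' regions between consecutive blocks form a set of asymptotic density $0$, so that their $\diam X$-bounded contribution to the Besicovitch distance is negligible.
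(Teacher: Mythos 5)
Your overall architecture---decompose $\underline{x}$ into blocks, approximate each block by a genuine orbit segment, glue the segments with Lemma~\ref{lem:3.4}, and close with the $3\eps_j$ triangle inequality---matches the strategy the paper imports from \cite[Theorem~3.5]{KKO}, and your final Besicovitch estimate is fine as far as it goes. The gap sits exactly at the step you yourself flag as ``the hard part'', and the adaptive remedy you propose does not close it. The asymptotic average pseudo-orbit hypothesis controls the \emph{one-step} errors $\rho(T(x_n),x_{n+1})$ in density; to convert this into the statement that the genuine orbit of $x_{l_j}$ stays close in average to $x_{l_j},\ldots,x_{l_{j+1}-1}$ you must propagate those errors through $T,T^2,\ldots,T^{n_j}$, and the relevant modulus of uniform continuity degrades as $n_j$ grows. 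Consequently, for a fixed one-step threshold $\delta$ the pointwise tracking time of a reference point is \emph{bounded} (roughly the largest $K$ for which the modulus of continuity of $T^K$ turns $\delta$ into $\eps_j$), after which the deviation is of order $\diam X$ and your running-average stopping rule fires within a bounded number of further steps. So the adaptive blocks need not have $n_j\to\infty$ --- yet $n_j\to\infty$ is indispensable, both to make the specification gaps $M_{\eps_j/2}(n_j)$ density-negligible and even to satisfy the hypothesis $M_{\eps_j/2}(n_j)\le n_j$ of Lemma~\ref{lem:3.4}. The missing idea is a diagonalization in the opposite order: fix target block lengths $K_m\uparrow\infty$ and accuracies $\eps_m\downarrow 0$ \emph{first}, use uniform continuity of $T,\ldots,T^{K_m}$ to determine one-step thresholds $\delta_m$ \emph{afterwards}, and only then invoke the vanishing density of $\{n:\rho(T(x_n),x_{n+1})\ge\delta_m\}$ on a suitable tail $[N_m,N_{m+1})$, discarding the rare blocks that still contain a large one-step error into a density-zero exceptional set. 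Your write-up conflates ``one-step errors are eventually small in density'' with ``reference orbits track for a long time'', and that is precisely the implication that fails without this ordering of choices.

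Two smaller mismatches with the paper's argument: Lemma~\ref{lem:3.4} as stated requires the lengths $n_j$ and tolerances $\eps_j$ to be constant on each index block $[\alpha_i,\alpha_{i+1})$, which adaptively chosen lengths violate (fixable, but it has to be addressed before you can ``feed these data into Lemma~\ref{lem:3.4}''); and the paper's proof explicitly relies on Lemma~\ref{lem:3.2} (mixing) in the role played by chain mixing in \cite{KKO}, whereas mixing plays no role anywhere in your outline.
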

\begin{proof}
The proof \cite[Theorem 3.5]{KKO} relies on chain mixing of every surjective dynamical system with the almost specification
property (\cite[Lemma~3.2]{KKO}) and \cite[Lemma~3.4]{KKO}. The latter result is analogous to Lemma~\ref{lem:3.4} with the only difference, that
the almost specification property is replaced by the weak specification property.
Therefore the Theorem follows from the proof of \cite[Theorem~3.5]{KKO} with Lemma~\ref{lem:3.2} and \ref{lem:3.4} used where \cite[Lemma~3.2]{KKO} and \cite[Lemma~3.4]{KKO} were applied in \cite{KKO}.
\end{proof}

\begin{rem}
In fact, it was proved in \cite[Corollary 6.9]{WOC} that there is no need to assume that $T$ is surjective 
in \cite[Theorem~3.5]{KKO}. 
We do not know whether the surjectivity assumption can be relaxed in Theorem \ref{thm:wspec_aasp}.
\end{rem}

\section{Asymptotic average pseudo-orbits and periodic decompositions}

Following Banks \cite{Banks97} we say that a collection $\mathcal D=\{D_0,\ldots,D_{n-1}\}$ of subsets of $X$ is a {\it regular periodic decomposition} if the following conditions are satisfied:
\begin{enumerate}
\item $T(D_i)\subset D_{(i+1)\,\text{mod}\,n}$ for $0\leq i<n$,
\item $D_i=\overline{\text{int}(D_i)}$ for each $0\leq i<n$,
\item the intersection $D_i\cap D_j$ is nowhere dense whenever $i\neq j$,
\item $\bigcup_{i=0}^{n-1}D_i=X$.
\end{enumerate}

Before we state the main result of this section we make the following observation. We leave the proof to the reader.

\begin{rem}\label{rem:aasp}
Fix $r>0$. Let $\{n_k\}_{k=0}^{\infty}\subset\N$ be such that $0=n_0<n_1<n_2<\ldots$ and  $n_k=0\; (\text{mod }r)$.
Then $\dbar\big(\{s_N\,:\,N\in\N\}\big)=0$, where $s_N=\sum_{j=0}^N n_j$. If $\und{z}=\{z_j\}_{j=0}^{\infty}$ is given by $z_k=T^{k-s_N}(y_N)$ for $s_N\leq k<s_{N+1}$, where $\{y_N\}_{N=0}^{\infty}\subset X$, then:
\begin{enumerate}
\item The sequence $\und{z}=\{z_n\}_{n=0}^{\infty}$ is an average asymptotic pseudo orbit for $T$ and $\und{z}'=\{z_{nr}\}_{n=0}^{\infty}$ is an average asymptotic pseudo orbit for $T^r$.
\item If $x\in X$ is such that $D_B(\und{z}',\und{x}_{T^r})=0$, 
then $D_B(\und{z},\und{x}_{T})=0$.
\end{enumerate}
\end{rem}

Now we are ready to prove the following.
\begin{thm}\label{thm:pertrace}
Assume that $(X,T)$ is a dynamical system with a regular periodic decomposition $\{D_0,\ldots, D_{r-1}\}$
and that $(D_0,T^r)$ has the asymptotic average shadowing property.
If $V\subset\MT(X)$ is nonempty, closed and connected, then there is a point $x\in X$ such that $\om{x}=V$. In particular, every $T$-invariant measure has a generic point.
\end{thm}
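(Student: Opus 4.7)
The plan has three parts: reducing the theorem via the periodic decomposition, constructing a specific pseudo-orbit whose distribution measures form $V$, and invoking the asymptotic average shadowing property.

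\emph{Strategy.} I will build a sequence $\und{y}\in X^\infty$ that is a concatenation of $T$-orbit segments whose initial points lie in $D_0$ and whose lengths are divisible by $r$, and that satisfies $\hat\omega(\und{y})=V$. Remark~\ref{rem:aasp}(1) then yields that $\und{y}'=\{y_{nr}\}_{n=0}^{\infty}\subset D_0$ is an asymptotic average pseudo-orbit for $T^r$; by the hypothesis, AASP of $(D_0,T^r)$ produces $x\in D_0$ with $D_B(\und{y}',\und{x}_{T^r})=0$; Remark~\ref{rem:aasp}(2) upgrades this to $D_B(\und{y},\und{x}_T)=0$; finally Corollary~\ref{cor:D_B-zero} forces $\hat\omega(\und{x}_T)=\hat\omega(\und{y})=V$, as required. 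The ``in particular'' statement is the special case $V=\{\mu\}$.

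\emph{Choosing a dense ``path'' in $V$.} Since $V$ is a nonempty compact connected subset of $(\M(X),\di)$, pick $\{\mu_k\}_{k=1}^{\infty}\subset V$ with $\di(\mu_k,\mu_{k+1})\to 0$ such that every $\mu\in V$ is a subsequential limit of $\{\mu_k\}$ (obtained by chaining finite $1/k$-nets of $V$ using connectedness). By the ergodic decomposition, each $\mu_k$ is $1/k$-approximable in $\di$ by a finite convex combination $\sum_{i=1}^{j_k}\lambda_{k,i}\nu_{k,i}$ of ergodic measures $\nu_{k,i}\in\M_T^e(X)$. Fix a generic point $z_{k,i}\in\Gen(\nu_{k,i})\cap D_0$ for each $(k,i)$ (available because each $T$-ergodic measure charges $D_0$ positively, by the periodic decomposition, and generic points have $\nu_{k,i}$-full measure). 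Let the \emph{fundamental block} $F_k$ be the concatenation of $T$-orbit segments $T^{[0,a_{k,i}r-1]}(z_{k,i})$ for $i=1,\ldots,j_k$, where $a_{k,i}$ is chosen so that $\di(\mathfrak m(F_k),\mu_k)<1/k$ and so that $|F_k|=r\sum_i a_{k,i}$ is divisible by $r$.

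\emph{Assembling $\und{y}$ and verifying $\hat\omega(\und{y})=V$.} Pick $M_k$ growing very rapidly and form $\und{y}\in X^\infty$ by concatenating $M_k$ copies of $F_k$ for $k=1,2,\ldots$; set $L_k=M_k|F_k|$ and $S_k=L_1+\cdots+L_k$. Arrange that $L_k/S_k\to 1$ and that the density of ``seams'' (coordinates $n$ with $y_{n+1}\neq T(y_n)$) in $\{0,\ldots,S_k-1\}$ vanishes. The initial points of all orbit segments composing $\und{y}$ lie in $D_0$ and all segment lengths are divisible by $r$, as required by Remark~\ref{rem:aasp}. The inclusion $V\subseteq\hat\omega(\und{y})$ follows from $\di(\mathfrak m(\und{y},S_k),\mu_k)\to 0$ together with the density of $\{\mu_k\}$ in $V$. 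For the reverse inclusion, for any $n$ with $S_{k-1}\leq n<S_k$ decompose
\[
\mathfrak m(\und{y},n)=\alpha\,\mathfrak m(\und{y},S_{k-1})+(1-\alpha)P_n,
\]
where $\alpha=S_{k-1}/n$ and $P_n$ is the partial empirical of the part of outer block $k$ traversed so far. Since $M_k\to\infty$, once a full copy of $F_k$ has been traversed $P_n$ is $O(1/k)$-close to $\mu_k$; before that $\alpha$ is $1-o(1)$, so $\mathfrak m(\und{y},n)$ stays close to $\mu_{k-1}$; combined with $\di(\mu_{k-1},\mu_k)\to 0$ this keeps $\mathfrak m(\und{y},n)$ in a vanishing neighborhood of $V$, forcing $\hat\omega(\und{y})\subseteq\bar V=V$.

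\emph{Main obstacle.} The last inclusion is the delicate step: it requires careful balancing of the three parameters $a_{k,i}$ (accuracy of $F_k$), $M_k$ (so that partial empirical measures within outer block $k$ also approximate $\mu_k$ once one copy of $F_k$ is completed), and $L_k$ (so that each new outer block dominates past history and the seam density vanishes). Connectedness of $V$ enters essentially through $\di(\mu_k,\mu_{k+1})\to 0$: without it, accumulation points occurring during transitions between consecutive $\mu_k$'s could lie outside $V$.
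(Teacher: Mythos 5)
Your argument has the same skeleton as the paper's proof: build an asymptotic average pseudo-orbit $\und{y}$ made of orbit segments whose initial points lie in $D_0$ and whose lengths are divisible by $r$, with $\om{\und{y}}=V$; pass to $(D_0,T^r)$ via Remark~\ref{rem:aasp}(1); apply the asymptotic average shadowing property there; and conclude with Remark~\ref{rem:aasp}(2) and Corollary~\ref{cor:D_B-zero}. The only real divergence is in the first step: the paper obtains the required pseudo-orbit by citing Sigmund (\cite[Remark 1]{Sigmund77}) and observing that his construction can be arranged with segments based in $D_0$ and of lengths divisible by $r$, whereas you rebuild it from scratch (a chain $\{\mu_k\}$ dense in $V$ with $\di(\mu_k,\mu_{k+1})\to 0$, approximation by convex combinations of ergodic measures, generic points chosen in $D_0$ --- legitimate, since $T(D_i)\subset D_{(i+1)\bmod r}$ forces every invariant measure to give each $D_i$ mass at least $1/r$ --- and the block-repetition bookkeeping). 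Your sketch of that construction is sound, and the parameter balancing you flag as the main obstacle is exactly what Sigmund's lemma packages away; your route buys a self-contained proof at the cost of redoing a known construction. One detail to adjust: Remark~\ref{rem:aasp} is stated for strictly increasing segment lengths $n_1<n_2<\cdots$, while your $\und{y}$ repeats segments of equal length many times, so it does not literally fit the remark's hypotheses; the remark's proof only uses that the set of concatenation points $\{s_N\}$ has density zero, which your requirement that the seam density $j_k/|F_k|$ tend to $0$ guarantees, so you should either invoke that slightly more general form or reorganize the statement accordingly.
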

\begin{proof}
Let $V$ be a nonempty, closed and connected subset of $\MT(X)$. Sigmund (see \cite[Remark 1]{Sigmund77}) proved that
there is an asymptotic average pseudoorbit $\s{z}$ for $T$ such that
$\om{\underline z}=V$ (note that Sigmund used different terminology). It is easy to see that actually \cite[Remark 1]{Sigmund77}
allows us to assume that
$z_k=T^{k-s_N}(y_N)$ for $s_N\leq k<s_{N+1}$ for some sequences $\{n_k\}_{k=0}^{\infty}\subset\N$ and $\{y_N\}_{N=0}^{\infty}\subset D_0$ such that
$0=n_0<n_1<n_2<\ldots$ and $n_k=0\; (\text{mod }r)$.
By Remark~\ref{rem:aasp} the sequence $\und{z}'=\{z_{nr}\}_{n=0}^{\infty}$ is an average asymptotic pseudo-orbit for $T^r$
and furthermore, by definition we obtain that also $\und{z}'\subset D_0$.
Let $x\in D_0$ asymptotically trace on average $\und{z}'$ for $T^r$. 
Then $D_B(\und{z}',\und{x}_{T^r})=0$ and
using Remark~\ref{rem:aasp} one more time we get $D_B(\und{z},\und{x}_T)=0$.
By Corollary \ref{cor:D_B-zero} we have $\om{x}=\om{\und{z}}=V$ which ends the proof.
\end{proof}


Since it is proved in \cite{KKO} that the almost specification property implies the average shadowing property and in Section~\ref{weakspec} we have shown that the same follows from the weak specification property,
Theorem~\ref{thm:pertrace} immediately implies the following corollary, which can be also found in \cite{DTY} (with a different proof).

\begin{cor}\label{cor:weakspec}
	If $(X,T)$ is surjective and has the relative weak specification property (or relative almost specification property), then for every asymptotic average pseudo orbit $\und{x}=\{x_i\}_{i=0}^{\infty}\in X^{\infty}$ there exists a point $z$ such that 
$\om{\und x}=\om{z}$.
	In particular, if $V\subset\MT(X)$ is nonempty, closed and connected, then there is a point $z\in X$ such that $\om{z}=V$ and every $T$-invariant measure has a generic point.
\end{cor}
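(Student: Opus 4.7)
The plan is to assemble Theorem~\ref{thm:pertrace} with the results linking specification to the asymptotic average shadowing property. First I would read ``relative weak (resp.\ almost) specification'' in the standard sense: $(X,T)$ admits a regular periodic decomposition $\{D_0,\ldots,D_{r-1}\}$ for which the first-return system $(D_0,T^r)$ has the weak (resp.\ almost) specification property. Theorem~\ref{thm:wspec_aasp} of Section~\ref{weakspec}, applied to $(D_0,T^r)$, and its almost-specification counterpart established in \cite{KKO} together yield that $(D_0,T^r)$ has the asymptotic average shadowing property, so the standing hypothesis of Theorem~\ref{thm:pertrace} is fulfilled.

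With that in hand, the ``prescribed $V$'' statement is just a citation of Theorem~\ref{thm:pertrace}: for any nonempty, closed, and connected $V\subset\MT(X)$ there is $z\in X$ with $\om{z}=V$. Specialising to $V=\{\mu\}$ for an arbitrary $\mu\in\MT(X)$ produces a generic point for $\mu$, which gives the last clause of the corollary.

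For the first assertion I would take an arbitrary asymptotic average pseudo orbit $\und{x}$ of $T$ and set $V:=\om{\und{x}}$. The set $V$ is automatically nonempty and closed, being the set of accumulation points of $\{\m(\und{x},n)\}_{n=1}^{\infty}$ in the compact space $\M(X)$; it is connected because $\di(\m(\und{x},n),\m(\und{x},n+1))\to 0$ as $n\to\infty$; and it is contained in $\MT(X)$ by the corollary following Corollary~\ref{cor:im:51}, since $\und{x}$ is an asymptotic average pseudo orbit. Applying Theorem~\ref{thm:pertrace} to this $V$ produces $z\in X$ with $\om{z}=V=\om{\und{x}}$, as required. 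The only real obstacle is the terminological one of unpacking ``relative'' specification; once that is fixed the corollary is a clean assembly of results already established, requiring no further estimates.
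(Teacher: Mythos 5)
Your proposal is correct and follows essentially the same route as the paper: the authors likewise read the ``relative'' hypotheses as giving weak (or almost) specification for $(D_0,T^r)$, invoke Theorem~\ref{thm:wspec_aasp} (resp.\ the result of \cite{KKO}) to obtain the asymptotic average shadowing property for $(D_0,T^r)$, and then cite Theorem~\ref{thm:pertrace}. Your explicit verification that $V=\om{\und{x}}$ is nonempty, closed, connected and contained in $\MT(X)$ is a detail the paper leaves implicit, but it matches the facts already established in Sections~1 and~3.
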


Corollary \ref{cor:weakspec} extends results from \cite{Dateyama81, Dateyama82,Sigmund70, Sigmund74, Sigmund76, Sigmund77}.

\begin{figure}
  \begin{minipage}[h]{.45\textwidth}
    \begin{center}
    \includegraphics[scale=1]{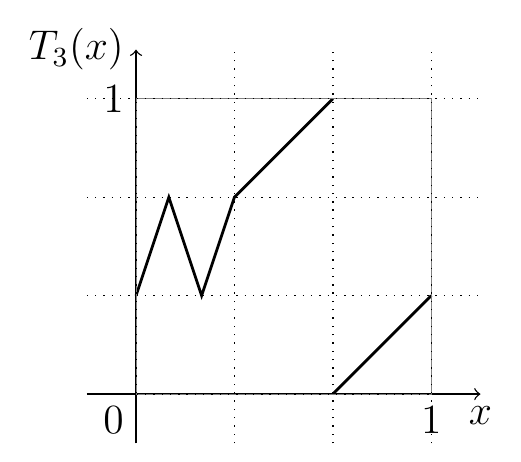}
    \caption{Graph of the map $T_3$.\label{fig:ex2}}
    \end{center}
  \end{minipage}
  \begin{minipage}[h]{.45\textwidth}
    \begin{center}
   \includegraphics[scale=1]{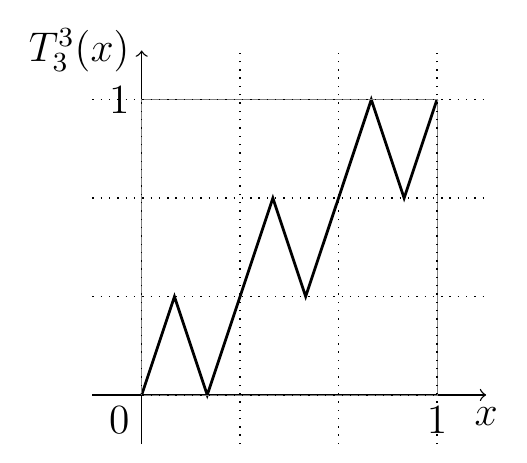}
   \caption{Graph of the map $T_3^3$.\label{fig:ex3}}
    \end{center}
  \end{minipage}
  \hfill
\end{figure}

\begin{ex}\label{ex:example}
The following example shows that there are maps which do not have the asymptotic average shadowing property but have a regular periodic decomposition $\{D_0,\ldots,D_{r-1}\}$ such that $T^r|_{D_0}$ satisfies the asymptotic average shadowing property.

By $\mathbb{T}^k=\mathbb{R}^k/\mathbb{Z}^k$ we denote the $k$-dimensional torus.
Let $S\colon \mathbb T^k\to\mathbb T^k$ be a toral automorphism whose associated linear map is nonhyperbolic
but has no roots of unity as eigenvalues. It was proved in \cite{Lind82, Marcus80} that such an automorphism satisfies the weak specification property.
Let $n\ge 2$ and $T_n\colon\mathbb T^1\to\mathbb T^1$ (see Figure \ref{fig:ex2}) be given by
\begin{equation*}
T_n(x)=\left\{\begin{array}{ll}3x+1/n,&\text{ if }0\leq x<1/(3n),\\
-3x+3/n,&\text{ if }1/(3n)\leq x<2/(3n),\\
3x-1/n,&\text{ if }2/(3n)\leq x<1/n,\\
x+1/n \mod 1, & \text{ if }1/n\leq x\leq 1.
\end{array}\right.
\end{equation*}
Define $F_n=S\times T_n\colon\mathbb T^{k+1}\to \mathbb T^{k+1}$. 
Notice that $$\big\{\mathbb T^k\times [0,1/n],\,\mathbb T^k\times [1/n,2/n],\, \ldots,\, \mathbb T^k\times [(n-1)/n,1]\big\}$$ is a regular periodic decomposition for $F_n$. Moreover, $F_n^n|_{\mathbb T^k\times[0,1/n]}$ is a product of two maps which have the weak specification property and hence satisfies it as well.  To see that $F_n$ does not have the asymptotic average shadowing property notice that the Lebesgue measure is a fully supported invariant measure for $F_n$.  By \cite[Theorems 3.7 \& 4.3]{KKO} any map with the asymptotic average shadowing and fully supported invariant measure must be weakly mixing, but $F_n^n$ is not even transitive as $F_n^n\big(\mathbb T^k\times[0,1/n]\big)\subset \mathbb T^k\times[0,1/n]$ (see Figure \ref{fig:ex3}).
\end{ex}
\section{Shift space with the asymptotic shadowing property but without almost specification}
In this section we show that the proximal dynamical system $\Y$ with positive entropy introduced in \cite{PO_BLMS} has the average shadowing property and fully supported invariant measure. By \cite[Theorem~5.3]{KOR}, a nontrivial proximal system like $\Y$ never has the almost specification property. Recall that a pair $(x,y)\in X\times X$ is \emph{proximal}, if $$\liminf\limits_{n\to\infty}\rho\big(T^n(x),T^n(y)\big)=0.$$ A dynamical system is \emph{proximal} if every pair $(x,y)\in X\times X$ is proximal. It remains open whether $\Y$ has the asymptotic average shadowing property.

We recall the construction from \cite{PO_BLMS}. Let $\Al=\set{0,\diamondsuit}$ and
let $1<s<t$ be positive integers. We define a labelled graph $\G^{(s,t)}=(V,E,i,t,\Lom)$ over $\Al$ in the following way (for the definition of labelled graph and sofic shift see \cite{ISDC}):
\begin{multline*}
V=\set{v_0,\ldots, v_{t-s},w_0,\ldots, w_{s-1}}, \quad E=\set{e_1,\ldots, e_{t-s},e_1',\ldots , e_{s-1}',\hat{e}_1,\hat{e}_2,\hat{e}_3},\\
i(e)=\begin{cases}
v_{i-1},& e=e_i\\
w_{i-1},& e=e_i'\\
w_{s-1},& e=\hat{e}_3\\
v_{t-s},& e\in \set{\hat{e}_1,\hat{e}_2},
\end{cases}
t(e)=\begin{cases}
v_{i},& e=e_i\\
w_{i},& e=e_i'\\
v_0  ,& e=\hat{e}_3\\
w_{0},& e=\hat{e}_1\\
w_{1},& e=\hat{e}_2,
\end{cases}\\
\Lom(e)=\begin{cases}
\diamondsuit,& e=e_i\text{ or }e=\hat{e}_1\\
0,& e=e_i'\text{ or }e\in \set{\hat{e}_2,\hat{e}_3}.
\end{cases}
\end{multline*}

\begin{figure}
\includegraphics{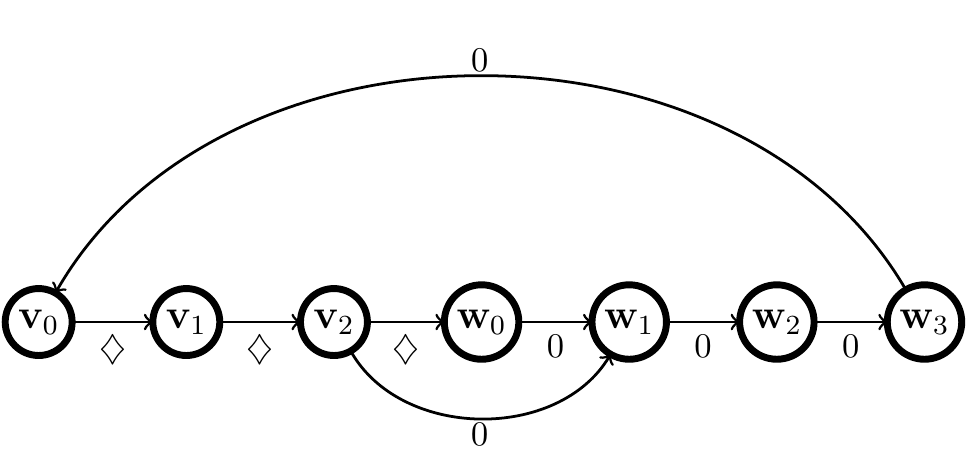}
\caption{The graph $\mathcal{G}^{(4,6)}$}
\label{graph}
\end{figure}
Denote by $X_{{(s,t)}}$ the shift space consisting of sequences over $\Al$ which are labels of infinite paths on $\G^{(s,t)}$.
In other words, $x\in X_{{(s,t)}}$ is a concatenation of blocks of $s$ zeros separated by blocks of $t-s$ or $t-s+1$ symbols $\diamondsuit$.
Denote by $p^{(s,t)}$ a periodic sequence of the form $(\diamondsuit^{t-s} 0^s)^\infty\in  \Al^\infty$. Now, define the following subsets of $\set{0,1}^\infty$:
\begin{eqnarray*}
F^{(s,t)}& = & \set{x\in \set{0,1}^\infty \; : \; \text{there exists}\; y\in X_{{(s,t)}} \textrm{ such that if } y_i=0, \textrm{ then } x_i=0},\\
P^{(s,t)}&=& \set{x\in \set{0,1}^\infty \; : \text{ if }p^{(s,t)}_i=0,\text{ then }x_i=0}.\\
\end{eqnarray*}
It is easy to verify that each $F^{(s,t)}$ is a shift in $\set{0,1}^\infty$.

Let $s_i=2^i$ and $t_i=10^i$. 
Define
$$
\Y=\bigcap_{i=1}^\infty F^{(s_i,t_i)}.
$$
Note that
\begin{equation}\label{eq:ass}
t_i> 3 s_i+2i > 5i \text{ and  }0 < \sum_{i=1}^\infty \frac{s_i}{t_i}= \frac{1}{4}<1.
\end{equation}
It was proved in \cite{PO_BLMS} that \eqref{eq:ass} implies that
$\Y$ is a topologically mixing and proximal system with positive topological entropy.
Let
\[
Y_n=X^{(s_1,t_1)}\cap\ldots \cap X^{(s_n,t_n)}.
\]

\begin{lem}\label{lem:dbar-close}
For every $\eps>0$ there is $n>0$ such that if $x\in Y_n$, then there is $z\in \Y$ with
$\dbar(x,z)<\eps$.
\end{lem}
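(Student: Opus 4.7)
My plan is to produce $z\in \Y$ from $x$ by adding just enough zeros so that all the defining constraints of $\Y$ are simultaneously witnessed, while keeping the density of modified coordinates small. For each $i\le n$ the inclusion $x\in F^{(s_i,t_i)}$ provides a witness $y^{(i)}\in X_{(s_i,t_i)}$ whose zero-set is contained in that of $x$. For $i>n$ I would take $y^{(i)}$ to be the canonical periodic sequence $p^{(s_i,t_i)}=(\diamondsuit^{t_i-s_i}0^{s_i})^\infty$, which lies in $X_{(s_i,t_i)}$ (it is the label of the short cycle in $\mathcal{G}^{(s_i,t_i)}$ starting at $v_0$). Then I would put $z_j=0$ whenever $x_j=0$ or some $y^{(i)}_j=0$ with $i>n$, and $z_j=1$ otherwise. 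Each $y^{(i)}$ then witnesses $z\in F^{(s_i,t_i)}$, and since $z$ differs from $x$ only by flipping certain $1$'s to $0$'s no previously existing witness is destroyed; thus $z\in\Y$.

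By Corollary~\ref{cor:uniformlyequiv}, the $\dbar$-pseudometric on $\{0,1\}^\infty$ coincides (up to uniform equivalence) with the upper density of coordinates at which two sequences differ, so it suffices to estimate $\dbar(\{j: z_j\neq x_j\})$. By construction this set is contained in $\bigcup_{i>n} A_i$, where $A_i=\{j:y^{(i)}_j=0\}$. The essential difficulty is that upper density is not countably subadditive, so one cannot simply bound $\dbar(\bigcup_{i>n} A_i)$ by $\sum_{i>n} s_i/t_i$ term by term.

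To get around this I would exploit the spacing built into the periodic witnesses: the first $0$ of $p^{(s_i,t_i)}$ appears at coordinate $t_i-s_i$, and \eqref{eq:ass} gives $s_i/t_i=(1/5)^i\le 1/5$, so $t_i-s_i\ge 4t_i/5$. Hence whenever $t_i>2N$ we have $A_i\cap[0,N)=\emptyset$, reducing the infinite union to the $O(\log N)$ indices $i$ with $t_i\le 2N$. For these, the block structure gives $\#(A_i\cap[0,N))\le N s_i/t_i+s_i$. Dividing by $N$ and summing yields
\[
\frac{1}{N}\#\Bigl(\bigcup_{i>n}A_i\cap[0,N)\Bigr)\le \sum_{i>n}\frac{s_i}{t_i}+\frac{1}{N}\sum_{i=1}^{\lfloor\log_{10}(2N)\rfloor}s_i,
\]
and since $s_i=2^i$ the last sum is $O(N^{\log_{10}2})=o(N)$. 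Passing to $\limsup_{N\to\infty}$, the bound collapses to $\sum_{i>n}(1/5)^i$, which is less than $\eps$ once $n$ is chosen sufficiently large. The critical step is the previous one: the geometric gap between $t_i=10^i$ and $s_i=2^i$ is what forces all but finitely many $A_i$ to be invisible in any fixed window $[0,N)$, and this is precisely the fact that circumvents the failure of countable subadditivity of $\dbar$.
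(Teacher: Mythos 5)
Your construction of $z$ (setting $z_j=0$ whenever $x_j=0$ or $p^{(s_i,t_i)}_j=0$ for some $i>n$, keeping the old witnesses for $i\le n$ and using the periodic sequences as new witnesses for $i>n$) is exactly the paper's proof, and it is correct. The only difference is that you spell out the density estimate $\dbar\bigl(\bigcup_{i>n}A_i\bigr)\le\sum_{i>n}s_i/t_i$ — using that $A_i$ misses $[0,N)$ once $t_i>2N$ and that $\sum_{t_i\le 2N}s_i=o(N)$ — whereas the paper simply asserts $\dbar(x,z)<\eps$; your justification correctly fills in the step where naive countable subadditivity of $\dbar$ would not suffice.
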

\begin{proof}
Fix $\eps>0$. Use \eqref{eq:ass} to find $n>0$ such that
\[
\sum_{i=n+1}^\infty \frac{s_i}{t_i}<\eps.
\]
Take $x\in Y_n$. Define $z$ by $z_j=0$ if there is $i>n$ such that $p^{(s_i,t_i)}_j=0$,  and $z_j=x_j$ otherwise. Then $z\in \Y$ and
$\dbar(x,z)<\eps$.
\end{proof}


\begin{lem}\label{lem:dec} Let $X$ be a shift space.
If for every $\eps>0$ there is a shift space $X_\eps$ with the average shadowing property such that $X\subset X_\eps$ and for every $x\in X_\eps$ there is $z\in X$ with
$\dbar(x,z)<\eps$, then $X$ has the average shadowing property.
\end{lem}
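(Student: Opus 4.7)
The plan is to bootstrap the average shadowing from the approximating shift spaces $X_\eps$ down to $X$ by combining their shadowing with the $\dbar$-approximation hypothesis and the equivalence of $\dbar$ with the Besicovitch pseudometric in the symbolic setting.

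Given $\eps>0$, I would first invoke Corollary~\ref{cor:uniformlyequiv} to choose $\eps'\in(0,\eps/3)$ small enough that $\dbar(y,z)<\eps'$ implies $D_B(y,z)<\eps/3$ for any two elements of the ambient symbolic space. The hypothesis applied at level $\eps'$ yields a shift space $X_{\eps'}\supseteq X$ with the average shadowing property such that every point of $X_{\eps'}$ admits a $\dbar$-approximation within $\eps'$ by a point of $X$. Using the average shadowing property of $X_{\eps'}$ with target accuracy $\eps/3$, I would select the corresponding parameter $\delta>0$.

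Next, given any $\delta$-average pseudo-orbit $\{x_j\}_{j=0}^\infty\subset X$, the inclusion $X\subset X_{\eps'}$ together with the fact that the shift map and the metric are inherited from $\mathcal{A}^\infty$ show that $\{x_j\}$ is also a $\delta$-average pseudo-orbit in $X_{\eps'}$. Hence there is some $x\in X_{\eps'}$ which shadows it in average within $\eps/3$. I would then approximate $x$ by a point $z\in X$ with $\dbar(x,z)<\eps'$, which, by the choice of $\eps'$, forces $D_B(x,z)<\eps/3$. Applying the pointwise triangle inequality $\rho(\sigma^n(z),x_n)\le \rho(\sigma^n(z),\sigma^n(x))+\rho(\sigma^n(x),x_n)$, averaging, and passing to the limit superior yields
\[
\limsup_{N\to\infty}\frac{1}{N}\sum_{n=0}^{N-1}\rho(\sigma^n(z),x_n)\le D_B(z,x)+\eps/3<\eps,
\]
so $z\in X$ witnesses average $\eps$-shadowing of $\{x_j\}$, proving that $X$ has the average shadowing property.

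The only conceptual obstacle is making the combinatorial $\dbar$-hypothesis compatible with the metric averaging used to define average shadowing; this is precisely the content of Corollary~\ref{cor:uniformlyequiv}, and once one reads off the modulus of uniform equivalence the remaining work is bookkeeping of constants.
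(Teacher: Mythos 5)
Your argument is correct and follows essentially the same route as the paper's own proof: fix a $\dbar$-to-$D_B$ modulus via Corollary~\ref{cor:uniformlyequiv}, shadow the pseudo-orbit inside the approximating shift, pull the shadowing point back into $X$ by the $\dbar$-approximation hypothesis, and conclude with the triangle inequality for $D_B$. The only difference is cosmetic (an $\eps/3$--$\eps/3$ split where the paper uses $\eps/2$--$\eps/2$).
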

\begin{proof}
Fix $\eps>0$. Let $\beta>0$ be such that $\dbar(x,x')<\beta$ implies $D_B(x,x')<\eps/2$ (the existence of such a $\beta$ is guaranteed by Corollary~\ref{cor:uniformlyequiv}).
Consider shift space $X_{\beta}$. Use the average shadowing property for $X_{\beta}$ to find $\delta>0$ such that every
$\delta$-average pseudoorbit for $\sigma$ contained in $X_{\beta}$ is $\eps/2$ traced on average.
Let $\und{x}$ be a $\delta$-average pseudoorbit for $\sigma$ contained in $X$. Since $X\subset X_{\beta}$ there is a point $y\in X_{\beta}$
which $\eps/2$-traces $\und{x}$ in average. Use the assumption to find a point $z\in X$ such that $\dbar(y,z)<\beta$.
Hence $D_B(\und{z}_T,\und{x})\le D_B(z,y)+D_B(\und{y}_T,\und{x})<\eps$.
\end{proof}

\begin{lem}\label{lem:ret_times}
For every nonempty open set $U\subset \Y$ there is $y\in \Y$ such that $N(y,U)$ has positive upper density.
\end{lem}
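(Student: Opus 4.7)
The plan is to transfer a positive density of returns from the larger shift $Y_n$ down to $\Y$ by means of the $\dbar$-approximation provided by Lemma~\ref{lem:dbar-close}. First I would reduce the statement to cylinder sets: since $U$ is a nonempty open subset of $\Y$, it contains a basic cylinder $[w]\cap\Y$ for some word $w$ valid in $\Y$; set $\ell=|w|$. Because $\Y\subseteq Y_n$ for every~$n$, the word $w$ is valid in each $Y_n$ and $[w]\cap Y_n$ is a nonempty open subset of $Y_n$.

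Next, for every sufficiently large $n$, I would exhibit a point $y_n\in Y_n$ whose orbit visits $[w]$ with upper density bounded below by a constant $c_w>0$ that does not depend on $n$. The shift $Y_n$ is a finite intersection of sofic shifts (hence itself sofic), it contains the mixing system $\Y$, and its topological entropy satisfies $h(\Y)\le h(Y_n)\le h(F^{(s_1,t_1)})<\infty$. Granting that $Y_n$ is a mixing sofic shift (which should follow from the structure of the building blocks $X_{(s_i,t_i)}$ together with the exponential separation $s_i=2^i$, $t_i=10^i$), its Parry measure of maximal entropy $\mu_n$ is unique, ergodic and fully supported, and the standard cylinder estimate for mixing sofic shifts yields $\mu_n([w])\ge c_w$ for some $c_w>0$ depending only on $w$ and the universal upper bound $h(F^{(s_1,t_1)})$. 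A $\mu_n$-generic point $y_n\in Y_n$, whose existence is guaranteed by Birkhoff's ergodic theorem, then visits $[w]$ with density exactly $\mu_n([w])\ge c_w$.

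I would then set $\eps:=c_w/(2\ell)$ and apply Lemma~\ref{lem:dbar-close} to pick $n$ so large that every point of $Y_n$ lies within $\dbar$-distance $\eps$ of some point of $\Y$. Choose $y_n$ as above and let $z\in\Y$ satisfy $\dbar(y_n,z)<\eps$; by Corollary~\ref{cor:uniformlyequiv} the set $D$ of coordinates on which $y_n$ and $z$ differ has upper density $\dbar(D)<\eps$. A position $k$ at which $w$ occurs in $y_n$ but not in $z$ must meet $D$ inside $[k,k+\ell)$, so at most $\ell\cdot\#(D\cap[0,N+\ell))$ such $k$ lie in $[0,N)$; passing to the limit superior, the upper density of ``lost'' $w$-occurrences is at most $\ell\eps$. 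Consequently $z$ visits $[w]$ with upper density at least $c_w-\ell\eps=c_w/2>0$, and since $[w]\cap\Y\subseteq U$ the point $y:=z$ satisfies the conclusion.

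The main obstacle is the structural claim used in the second paragraph: that $Y_n$ is mixing sofic and that its measure of maximal entropy assigns to $[w]$ a mass bounded below uniformly in~$n$. Should a direct verification prove delicate, a combinatorial substitute is to build $y_n$ by concatenating a long $Y_n$-valid block carrying $w$ at a prescribed positive density, using the mixing of $Y_n$ to glue successive blocks into a single point of $Y_n$ and a counting argument on valid length-$\ell$ extensions in $Y_n$ to secure a density lower bound that is independent of~$n$.
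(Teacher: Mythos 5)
Your overall strategy---approximate $\Y$ from outside by the shifts $Y_n$, produce a point of $Y_n$ returning to the cylinder $[w]$ with positive frequency, and transfer it into $\Y$ by a $\dbar$-small modification---is the same as the paper's in spirit, but your execution has a genuine gap, and it is exactly the step you flag as the ``main obstacle''. You need a lower bound $c_w>0$ on the return density in $Y_n$ that is \emph{uniform in $n$}, because your order of quantifiers forces you to fix $\eps=c_w/(2\ell)$ before invoking Lemma~\ref{lem:dbar-close} to choose $n=n(\eps)$. No standard estimate provides this: for a mixing sofic shift the measure of maximal entropy does give every allowed cylinder positive mass, but the lower bound involves ratios of Perron eigenvector entries of the presenting graph, which are not controlled by the entropy and the word length alone and may degenerate as $n\to\infty$ (the obvious explicit candidates, such as the periodic point $(w0^{t_n-\ell})^\infty\in Y_n$, return to $[w]$ with density $1/t_n\to 0$, which is useless for your scheme). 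Without uniformity the argument is circular: $\eps$ depends on $c_w$, $n$ depends on $\eps$, and $c_w$ depends on $n$. Your proposed ``combinatorial substitute'' in the last paragraph is precisely this missing uniform density bound restated, not a proof of it.

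The paper escapes the circularity by reversing the quantifiers and by using the \emph{explicit} form of the passage from $Y_n$ to $\Y$ rather than the black-box statement of Lemma~\ref{lem:dbar-close}. It fixes $n$ first, depending only on $k=|w|$, so that $\gamma:=1-k\sum_{r>n}s_r/t_r>0$; takes the periodic point $x=(w\,0^{t_n-k})^\infty\in Y_n$, whose returns to $[w]$ occur along $t_n\N$; and then passes into $\Y$ by zeroing coordinates along the $0$-positions of the periodic skeletons $p^{(s_i,t_i)}$, $i>n$. The point is that the set of windows of length $k$ untouched by this zeroing has upper density at least $\gamma$, so after a pigeonhole over residues modulo $t_n$ a set of occurrences of $w$ of upper density at least $\gamma/t_n>0$ survives. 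This bound does depend on $n$, but that is harmless because $n$ was fixed before any $\eps$ entered the argument; only positivity is needed. If you want to keep your measure-theoretic route, you would have to either prove the uniform bound $\inf_n\mu_n([w])>0$ for this specific family (nontrivial and not done), or restructure the proof so that $n$ is chosen first, as the paper does.
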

\begin{proof}
Let $z\in U$ and $k$ be such that the cylinder set defined by $z_{[0,k)}$ is contained in $U$. Take any  $n>k$ such that $t_n-s_n>k$ and
\[
1-k\sum_{r=n+1}^\infty \frac{s_r}{t_r}:=\gamma>0
\]
Then the periodic point $x=(z_{[0,k)}0^{t_n-k})^\infty$ belongs to $Z=\bigcap_{j=1}^n F^{(s_i,t_i)}$
and $t_n\N \subset \set{i : x_{[i,i+k)}=z_{[0,k)}}$.
Define $p\in \{0,\diamondsuit\}^\infty$ by
$p_j=0$ if there is $i>n$ such that $p^{(s_i,t_i)}_j=0$,  and $p_j=\diamondsuit$ otherwise.
Then $p\in \bigcap_{i=n+1}^\infty P^{(s_i,t_i)}$ and we claim that 
the set $A=\set{i : p_{[i,i+k)}=\diamondsuit^k}$
has positive upper density. To see this, note that
\[
d(\{i:p_i=0\})\le \sum_{r=n+1}^\infty \frac{s_r}{t_r},
\]
and since $p_i=0$ and $i \ge k$ implies $p_{[i-l,i-l+k)}\neq\diamondsuit^k$ for $l=-k+1,-k+2,\ldots,0$ we have
\[
\dbar(A)\ge 1- k d(\{i:p_i=0\})\ge \gamma.
\]
We have $\bigcup_{j=0}^{t_n-1}j+t_n\N=\N$, therefore for some $j\in\{0,1,\ldots,t_n-1\}$ the set $A_j = (A+j) \cap t_n\N$ has upper density at least $\gamma/t_n$. For that $j$ we construct a point $y=y_0y_1y_2\ldots$ by
$$
y_i=\begin{cases}
x_i,& i\in A_j+[0,k),\\
0 ,& \text{otherwise}.
\end{cases}
$$
then $y\in \Y$ and obviously by our construction we have that $A_j\subset N(y,U)$, hence $N(y,U)$ has positive upper density.
\end{proof}

\begin{thm}\label{thm:YproxFSM}
The shift space $\Y$ is mixing, has the average shadowing property, positive topological entropy and fully supported invariant measure.
\end{thm}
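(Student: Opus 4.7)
The plan is to handle the four assertions separately: mixing and positive topological entropy are already established in the construction of $\Y$ in \cite{PO_BLMS}, so I would simply quote those results and focus on the remaining two statements.

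For the fully supported invariant measure I would exploit Lemma \ref{lem:ret_times} together with the fact that cylinders in a shift space are clopen. Fix a countable base $\{U_k\}_{k\ge 1}$ of $\Y$ consisting of nonempty cylinder sets. Lemma \ref{lem:ret_times} supplies, for each $k$, a point $y_k\in\Y$ with $\dbar(N(y_k,U_k))>0$. Choosing a subsequence $\{n_j^{(k)}\}$ along which the Cesàro fraction $\Emp(y_k,n_j^{(k)})(U_k)$ converges to $\dbar(N(y_k,U_k))$, and then passing to a further weak-$*$-convergent subsequence of the empirical measures, I obtain a $T$-invariant probability measure $\mu_k$. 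Because $U_k$ is clopen, the evaluation $\nu\mapsto\nu(U_k)$ is weak-$*$ continuous, so $\mu_k(U_k)=\dbar(N(y_k,U_k))>0$. The convex combination $\mu=\sum_{k\ge 1}2^{-k}\mu_k$ is then a $T$-invariant probability measure giving positive mass to every $U_k$, hence to every nonempty open set, i.e. $\mu$ has full support.

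For the average shadowing property I would combine Lemmas \ref{lem:dbar-close} and \ref{lem:dec}. Given $\eps>0$, Lemma \ref{lem:dbar-close} supplies an $n=n(\eps)$ such that $Y_n\supset\Y$ and every point of $Y_n$ is $\dbar$-close (and hence, by Corollary \ref{cor:uniformlyequiv}, $D_B$-close) to some point of $\Y$. By Lemma \ref{lem:dec}, it then suffices to prove that each $Y_n$ has the average shadowing property.

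The hard step is this last reduction. The shift $Y_n$ is a finite intersection of shifts determined by the explicit finite labelled graphs $\G^{(s_i,t_i)}$, hence $Y_n$ is sofic. I would argue, by direct combinatorial inspection, that the cycle lengths in each $\G^{(s_i,t_i)}$ have greatest common divisor $1$ and that the lacunary choice $s_i=2^i$, $t_i=10^i$ leaves enough slack to align the admissible zero patterns at all $n$ levels simultaneously, so that $Y_n$ is in fact a mixing sofic shift. Mixing sofic shifts satisfy the classical specification property, which implies the average shadowing property, and feeding this back into Lemma \ref{lem:dec} completes the proof. I expect all routine issues (checking invariance of the limit measure, verifying the sofic/mixing property of $Y_n$ from the graph data) to be straightforward; the genuine obstacle is the combinatorial verification that the constraints imposed by the different levels do not accumulate to destroy the mixing of $Y_n$.
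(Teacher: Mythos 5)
Your proposal follows the paper's proof essentially step for step: mixing and positive topological entropy are quoted from \cite{PO_BLMS}; full support is obtained from Lemma~\ref{lem:ret_times} by turning positive upper density of return times into an invariant measure charging each basic open set (the paper cites \cite{Fur} for exactly the Krylov--Bogolyubov-along-subsequences argument you spell out) and summing $\sum_k 2^{-k}\mu_k$; and the average shadowing property is deduced from Lemmas~\ref{lem:dbar-close} and~\ref{lem:dec} after reducing to the finite intersections $Y_n$ being mixing sofic shifts, hence having specification and therefore (asymptotic) average shadowing. The only divergence is one of emphasis: the paper simply asserts that $Y_n$, as a finite intersection of mixing sofic shifts, is itself a mixing sofic shift (delegating the verification to the construction in \cite{PO_BLMS}), whereas you correctly single out this combinatorial compatibility of the zero patterns across levels as the point that actually needs checking.
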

\begin{proof}
Proximality, positive topological entropy and topological mixing follows from \cite{PO_BLMS}. Let $\mathcal{U}=\{U_n:n\in\N\}$ be the base for the topology on $\Y$.
Lemma~\ref{lem:ret_times} implies that for any open set $U_n\in\mathcal{U}$ we can find an invariant measure $\mu_n$ such that $\mu(U)>0$ (e.g. see \cite{Fur}).
Then $\mu=\sum_{n=1}^\infty 2^{-n}\mu_n$ is a fully supported measure.

Note that $Z_N=\bigcap_{i=1}^N F^{(s_i,t_i)}$ is the intersection of mixing sofic shifts for every $N\in\N$, therefore $Z_N$ is a mixing sofic shift.
Since every mixing sofic shift has the specification property, it follows from \cite{KO_AASP} that $Z_N$ has the asymptotic average shadowing property and so it also has the average shadowing property by \cite{KKO}. Now $\Y$ has the average shadowing property by Lemmas \ref{lem:dbar-close} and \ref{lem:dec}.
\end{proof}
\begin{cor}
The subshift $\Y$ has the average shadowing property but does not have the almost specification property.
\end{cor}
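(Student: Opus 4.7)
My plan is to derive the corollary as an immediate consequence of Theorem~\ref{thm:YproxFSM} combined with the already cited result \cite[Theorem~5.3]{KOR}, which asserts that no nontrivial proximal dynamical system can have the almost specification property. The corollary is essentially a repackaging of what has been proved, so no new machinery is needed.

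First, I would invoke Theorem~\ref{thm:YproxFSM} to record that $\Y$ has the average shadowing property. That theorem simultaneously gives topological mixing, positive topological entropy, proximality, and the existence of a fully supported invariant measure, all of which we will use to exclude almost specification. For the average shadowing half of the corollary there is literally nothing further to verify.

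Next, I would argue that $\Y$ is nontrivial, meaning it contains more than one point. This is clear because $\Y$ has positive topological entropy (from Theorem~\ref{thm:YproxFSM}, quoting \cite{PO_BLMS}), and a one-point system has zero entropy. Combining nontriviality with proximality, the hypotheses of \cite[Theorem~5.3]{KOR} are satisfied, so $\Y$ cannot have the almost specification property.

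There is no real obstacle here: the hard work was done in Section~6, where the delicate estimates (Lemmas~\ref{lem:dbar-close}, \ref{lem:dec}, and \ref{lem:ret_times}) were assembled into Theorem~\ref{thm:YproxFSM}. The only thing to double-check in writing the formal proof is that the cited statement \cite[Theorem~5.3]{KOR} applies in the form we need, namely to a nontrivial proximal system rather than, say, a system with a specific structural assumption we have not verified for $\Y$. Once that is confirmed, the corollary follows in one or two lines.
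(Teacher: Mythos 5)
Your proof is correct, but it takes a shorter route than the paper's own argument, and the difference is worth spelling out. You dispose of the almost specification half by quoting \cite[Theorem~5.3]{KOR} as the statement ``a nontrivial proximal system never has the almost specification property,'' which is exactly how the paper itself introduces the matter at the top of Section~6, so the citation is legitimate; your derivation of nontriviality from positive entropy is also fine. The paper's proof of the corollary, however, does not use that theorem as a black box: it instead invokes the result from \cite{KOR} that minimal points are dense in the measure center of any system with almost specification, then observes that the fully supported invariant measure from Theorem~\ref{thm:YproxFSM} forces the measure center to be all of $\Y$, while proximality forces $\Y$ to have a unique minimal point (a fixed point, citing \cite{AK}), which cannot be dense in an uncountable space. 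So the paper's route actually \emph{uses} the fully supported measure, whereas yours does not need it for this half; conversely, the paper's version is more self-contained in that it exhibits the mechanism behind the obstruction rather than outsourcing it. One small attribution point: proximality of $\Y$ is not listed in the statement of Theorem~\ref{thm:YproxFSM} (it is established in its proof, quoting \cite{PO_BLMS}), so in a final write-up you should cite the source of proximality explicitly rather than the theorem's statement. With that cosmetic fix, your two-line proof stands.
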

\begin{proof}
By \cite{KOR} minimal points are dense in the measure center of any dynamical system with the almost specification property.
On the other hand, it follows from Theorem~\ref{thm:YproxFSM} that $\Y$ is a nontrivial (hence uncountable) proximal system whose measure center equals the whole space. Proximality implies that $\Y$ has a fixed point which is the unique minimal point in the system (e.g. see \cite{AK}). Therefore minimal points of $\Y$ can not be dense in the measure center and $\Y$ can not have the almost specification property.
\end{proof}

Our last example shows that the assumptions of Lemma \ref{lem:dec} can not be weakened.

Given a shift space $X$ over an alphabet $\mathcal{A}$ we define the \emph{finite type approximation of order $m\in\mathbb{N}$} of $X$ by
\[
X_m=\{x\in\mathcal{A}^\infty:\,\forall k\in\mathbb{N}_0\,\exists y\in X\,x_{[k,k+m]}=y_{[0,m]}\}.
\]
In other words $X_m$ is a shift of finite type determined by finite blocks of length $m+1$ occurring in $X$ (see \cite[p.111]{DGSbook}).
The following lemma is probably well-known (cf. Proposition 3.62 in \cite{Kurkabook03}).
\begin{lem}\label{lem:equivalence}
Let $X$ be a mixing shift space. The following conditions are equivalent:
\begin{enumerate}
  \item $X$ is chain mixing,
  \item $X_m$ is mixing for every $m\in\mathbb{N}$.
\end{enumerate}
\end{lem}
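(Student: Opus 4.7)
My plan is to convert both directions of the equivalence into the same combinatorial statement about admissible words. Under the standard metric on a shift space, for any $\delta>0$ there is $N\in\mathbb{N}$ such that $\rho(y,z)<\delta$ is equivalent to agreement of $y$ and $z$ on an initial segment of length $N+1$. The $\delta$-chain condition $\rho(\sigma(z_i),z_{i+1})<\delta$ then becomes the overlap identity $(z_i)_{[1,N+1]}=(z_{i+1})_{[0,N]}$, which is precisely the transition rule between consecutive admissible $(N+2)$-windows in the finite-type approximation $X_{N+1}$. Thus $\delta$-chains in $X$ and allowed words in the language of $X_m$ encode the same data once the parameters are matched.

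For $(2)\Rightarrow(1)$ I fix $\delta>0$, choose $N$ as above, and take $x,y\in X$. Setting $u=x_{[0,N+1]}$ and $v=y_{[0,N+1]}$, the mixing of $X_{N+1}$ supplies, for every sufficiently large $L$, a word $s=uwv$ of length $L$ in the language of $X_{N+1}$; every $(N+2)$-subword of $s$ belongs to the language of $X$. I pick $z_i\in X$ with $(z_i)_{[0,N+1]}=s_{[i,i+N+1]}$ for $0\le i\le L-N-2$, arranged so that $z_0=x$ and the terminal point equals $y$. Consecutive blocks then overlap in $N+1$ symbols, $(z_i)_{[1,N+1]}=s_{[i+1,i+N+1]}=(z_{i+1})_{[0,N]}$, so $\rho(\sigma(z_i),z_{i+1})<\delta$ and we obtain a $\delta$-chain of any prescribed sufficiently large length from $x$ to $y$.

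For $(1)\Rightarrow(2)$ I fix $m$ and two allowed $(m+1)$-blocks $u,v$, realised as $u=x_{[0,m]}$ and $v=y_{[0,m]}$ for some $x,y\in X$. I choose $\delta$ so small that $\rho(\sigma(z),z')<\delta$ forces $z_{[1,m+1]}=z'_{[0,m]}$, and apply chain mixing of $X$ to obtain, for all large $L$, a $\delta$-chain $x=z_0,\ldots,z_L=y$. I then form the word
\[
s=(z_0)_0(z_1)_0\cdots(z_L)_0(z_L)_1\cdots(z_L)_m
\]
of length $L+m+1$. A short induction on the chain identity $(z_{i+1})_j=(z_i)_{j+1}$ yields $(z_{i+k})_0=(z_i)_k$ for $0\le k\le m$, so every length-$(m+1)$ window of $s$ equals $(z_i)_{[0,m]}$ for an appropriate $i$ and therefore lies in the language of $X$. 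Hence $s$ is in the language of $X_m$; since $s=uwv$ and $L$ may be any sufficiently large integer, $X_m$ is mixing.

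The main technical obstacle is pinning down the exact window sizes: the chain condition effectively shifts indices by one, so in $(2)\Rightarrow(1)$ one must invoke the approximation $X_{N+1}$ whose $(N+2)$-windows match the closeness precision, and in $(1)\Rightarrow(2)$ the trailing tail $(z_L)_1\cdots(z_L)_m$ is needed so that the rightmost $m$ windows of $s$ still take the form $(z_i)_{[0,m]}$ and remain admissible. Once this bookkeeping is right, both implications reduce to a single overlap computation.
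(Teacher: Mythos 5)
Your argument is correct in substance and rests on the same key observation as the paper's sketch: once $\delta$ is matched to a window length, $\delta$-chains in $X$ and admissible words of the corresponding finite type approximation encode the same data. The difference lies in how the argument is closed. The paper converts a point of $X_m$ into an infinite $\delta_m$-chain and then quotes two external facts --- that shifts of finite type have the shadowing property and that chain mixing is equivalent to mixing in the presence of shadowing --- while you translate between finite chains and finite words by hand in both directions. Your route is longer but self-contained, and it makes explicit the index bookkeeping (the shift by one hidden in the chain condition, the trailing tail of $z_L$) that the paper suppresses; the paper's route is shorter at the price of importing the shadowing machinery.

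One step is missing from your direction $(1)\Rightarrow(2)$: you verify the mixing condition for $X_m$ only on cylinders determined by words of length exactly $m+1$, whereas mixing requires it for all cylinders, and for a general shift space checking a single window length would not suffice. It does suffice here because $X_m$ is a shift of finite type with window size $m+1$, but this needs to be said. Concretely, given arbitrary $a,b\in\mathcal{L}(X_m)$, which one may assume have length at least $m+1$, let $u$ be the terminal $(m+1)$-block of $a$ and $v$ the initial $(m+1)$-block of $b$; your word $uwv$ can be glued to $a$ on the left and to $b$ on the right along these overlaps, and every $(m+1)$-window of the resulting word lies entirely inside $a$, inside $uwv$, or inside $b$, hence occurs in $X$. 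Since any finite word all of whose $(m+1)$-windows occur in $X$ belongs to $\mathcal{L}(X_m)$ (append a point of $X$ realizing its last window), the glued word witnesses the mixing condition for the cylinders of $a$ and $b$ with gap $|a|+|w|$, which ranges over all sufficiently large integers. With this routine addition your proof is complete.
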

\begin{proof}[Sketch of the proof] Note that for every $m$ the points in $X_m$ are in the one to one correspondence with $\delta_m$-chains in $X$, where $\delta_m$ goes to $0$ as $m\to\infty$. To see this take a point $x\in X_m$ and for each $j\in\mathbb{N}_0$ pick $x^{(j)}$ in the cylinder in $X$ determined by $x_{[j,j+m]}$. Furthermore, chain mixing is equivalent to mixing for systems with the shadowing property and all shifts of finite type have the shadowing property by \cite{WaltersSFT}.
\end{proof}

\begin{cor}\label{lem:approx}
Let $X$ be a mixing shift space. Then there exists a decreasing sequence $X_0\supset X_1\supset \ldots \supset X$ of mixing
shifts of finite type such that $X=\bigcap_{n=1}^\infty X_n$.
\end{cor}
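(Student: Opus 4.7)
The plan is to invoke the finite type approximations $X_n$ of order $n$ defined immediately before Lemma \ref{lem:equivalence}. By construction each $X_n$ is a shift of finite type, and the defining condition at once yields $X\subset X_n$ for every $n\in\mathbb{N}$.

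To see that the sequence is decreasing I would argue that if $x\in X_{n+1}$, then for every $k\geq 0$ the window $x_{[k,k+n+1]}$ equals $y_{[0,n+1]}$ for some $y\in X$, and restricting this window to length $n+1$ shows that $x\in X_n$. For the equality $\bigcap_{n\geq 1}X_n=X$, the inclusion $\supset$ has already been noted; for the reverse inclusion I would take $x\in\bigcap_n X_n$ and for each $n$ pick $y^{(n)}\in X$ with $y^{(n)}_{[0,n]}=x_{[0,n]}$. Then $y^{(n)}\to x$ in the product topology on $\mathcal{A}^\infty$, and closedness of $X$ forces $x\in X$.

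The mixing of each $X_n$ is then immediate from Lemma \ref{lem:equivalence}: the hypothesis that $X$ is mixing gives in particular that $X$ is chain mixing, so condition (1) of that lemma holds, which yields that $X_n$ is mixing for every $n$.

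No substantive obstacle is expected here. The corollary is essentially a bookkeeping statement about the behaviour of the finite type approximations, with the mixing assertion entirely deferred to Lemma \ref{lem:equivalence}. The one point that requires a line of care is the intersection argument, but it reduces to the compactness of $\mathcal{A}^\infty$ together with the closedness of $X$.
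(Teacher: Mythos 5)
Your proof is correct and matches the paper's intended argument: the corollary is stated without an explicit proof precisely because it is meant to follow by applying Lemma~\ref{lem:equivalence} to the finite type approximations $X_m$, using the standard facts that these form a decreasing sequence of shifts of finite type intersecting in $X$ and that topological mixing implies chain mixing. Your verification of the monotonicity and of $\bigcap_n X_n = X$ via compactness and closedness is exactly the routine bookkeeping the authors suppress.
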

Note that if $X=\bigcap_{n=1}^\infty X_n$ and $X_1\supset X_2\supset \ldots \supset X$ then for every $\eps>0$ there is $n$ such that $\dist(x,X)<\eps$ for each $x\in X_n$.
Next Theorem shows that this is not enough to imply the average shadowing property on the intersection and
the condition $\inf\{\dbar(x,y):x\in X,\,y\in X_n\}<\eps$ for all $n$ large enough in Lemma~\ref{lem:dec} is essential.

\begin{thm}
There exists a decreasing sequence $X_1\supset X_2\supset \ldots \supset X$ of mixing
sofic shifts such that the shift space $X=\bigcap_{n=1}^\infty X_n$ is mixing but does not have the average shadowing property.
\end{thm}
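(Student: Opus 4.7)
The approach splits into two independent tasks. First, Corollary~\ref{lem:approx} already provides a decreasing sequence of mixing SFTs (which are \emph{a fortiori} mixing sofic shifts) whose intersection is any prescribed mixing shift space. Therefore the entire problem reduces to exhibiting a single mixing shift space $X$ that fails the average shadowing property, and then taking $\{X_n\}$ to be the approximating SFTs supplied by that corollary.

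\medskip

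\textbf{Why such $X$ should exist.} Comparing Lemma~\ref{lem:dec} with Corollary~\ref{lem:approx} pinpoints the gap: the approximations produced by Corollary~\ref{lem:approx} are close in the sup (cylinder) metric, whereas Lemma~\ref{lem:dec} requires the much stronger $\dbar$-closeness. So failure of average shadowing should be typical for mixing shifts whose SFT envelopes are significantly ``larger'' in a density sense. Making this explicit is the whole content of the theorem.

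\medskip

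\textbf{Proposed construction of $X$.} I would build a coded shift $X$ over a small alphabet (say $\{0,1,2\}$) containing two distinguished periodic points $p^\infty$ and $q^\infty$ that lie in well-separated cylinders, in such a way that any admissible transition in $X$ from a block consistent with $p^\infty$ to one consistent with $q^\infty$ (or vice versa) requires a bridge word whose length grows along a prescribed sparse sequence. The lengths of these bridges are chosen so that:
\begin{enumerate}
\item $X$ is topologically mixing: for every two admissible words and every sufficiently large $n$ there is an admissible bridge of length $n$ between them, obtained by padding a base bridge word with long blocks of the symbol used for spacing;
\item any point of $X$ whose orbit visits both designated cylinders infinitely often has an upper density of visits to (say) the $p$-cylinder which is bounded away from $1/2$, because each switch from $p$-side to $q$-side consumes a bridge of length increasing along the visits.
\end{enumerate}
Granted such a construction, I would exhibit an explicit asymptotic average pseudo-orbit $\und{x}$ obtained by alternating very long blocks taken from the $p$-orbit and the $q$-orbit, with block lengths tuned so that $D_B(T(\und{x}),\sigma(\und{x}))=0$. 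Any point $y\in X$ attempting to asymptotically shadow $\und{x}$ in average would be forced to have upper density of visits to the $p$-cylinder equal to $1/2$, contradicting (ii). By Corollary~\ref{cor:uniformlyequiv} it is equivalent to carry out this density argument in the $\dbar$-pseudometric, which is more convenient for a symbolic system.

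\medskip

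\textbf{Main obstacle.} The delicate balance is in the simultaneous enforcement of mixing (which requires bridges of every sufficiently large length between any two admissible words) and of the density restriction on transitions (which prevents any orbit of $X$ from switching too often between the two designated regions). This balance requires an inductive definition of the language $\Bl(X)$, specifying at each stage both a growing family of admissible bridge words (to witness mixing) and a lower bound on the cost of any transition (to forbid dense switching). Once $X$ is defined, verification of mixing amounts to listing the bridge words, while the failure of average shadowing reduces to computing the $\dbar$-distance between the alternating pseudo-orbit and any orbit of $X$ and showing it is bounded below by some positive constant, independent of the shadowing candidate.
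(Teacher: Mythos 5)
Your reduction is exactly the one the paper uses: by Corollary~\ref{lem:approx} it suffices to produce a single mixing shift space $X$ without the average shadowing property, and the diagnosis that the gap between sup-metric approximation and $\dbar$-approximation is where Lemma~\ref{lem:dec} fails is also the right way to read the theorem. However, the proposal stops short of a proof at precisely the point where the work lies. The existence of your coded shift is asserted, not established: properties (i) and (ii) pull in opposite directions (mixing demands bridges of every large length between any two admissible words, while (ii) demands that no orbit can switch sides too often), and your own ``Main obstacle'' paragraph concedes that the inductive definition of the language reconciling them has not been carried out. As written, ``Granted such a construction'' is the entire theorem. Moreover, (ii) as stated is not obviously strong enough: a point of $X$ could in principle spend long stretches in the $p$-cylinder between rare switches, so bounding the \emph{upper} density of visits away from $1/2$ requires an argument relating the number of switches in $[0,N)$ forced by approximate shadowing to the cumulative bridge length, and that bookkeeping is not sketched.

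There is also a quantitative gap in the contradiction itself. Average shadowing does not force a shadowing point to have density of visits \emph{equal} to $1/2$; it only gives agreement with the pseudo-orbit outside a set of small density, so you need an effective statement of the form ``if $X$ has the average shadowing property then some $z\in X$ agrees with the pseudo-orbit on a set of lower density $>1-\alpha/3$'' before densities can be compared. The paper gets this from \cite[Thm.~5.5]{WOC}. Finally, note that the paper avoids your construction problem entirely by citing the shift of \cite{FKKL}, which is mixing, has dense periodic points, and has the property that every \emph{non-periodic} point has zero density of $1$s; alternating a periodic point $p$ with $d(\{i:p_i=1\})=\alpha>0$ and the fixed point $0^\infty$ then yields an asymptotic average pseudo-orbit that no point (periodic or not) can trace in average, by a two-line density comparison. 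If you want to salvage your approach, either carry out the inductive language construction in full or replace it with a reference to an existing example having a clean dichotomy of this kind.
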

\begin{proof}
Consider the shift space $X$ constructed in Section~6 of \cite{FKKL}. By \cite[Lemmas~6--8]{FKKL} $X$ has the following properties:
\begin{enumerate}
\item periodic points are dense,
\item $X$ is mixing, nontrivial and $0^\infty\in X$,
\item if $x\in X$ is not periodic then $d(\{i: x_i=1\})=0$.
\end{enumerate}
Since periodic points are dense and $X$ is nontrivial, there exists a periodic point $p\in X$ such that $d(\{i : p_i=1\})=\alpha>0$. Using $p$ and $0^\infty$ interchangeably
it is not hard to construct an asymptotic average pseudo-orbit $\{y^{(i)}\}_{i=0}^\infty$ such that
\begin{enumerate}[(i)]
\item\label{ex:c1} $\dbar(\{i : y^{(i)}_0=1\})=\alpha>0$  and
\item\label{ex:c2} $\dbar(\{i : y^{(i)}_0=0\})=1$.
\end{enumerate}
If $X$ has the average shadowing property, then by \cite[Thm. 5.5]{WOC} there exists $z\in X$ such that
\[
\underline{d}(\{ i : z_i=y^{(i)}_0\})>1-\alpha/3.
\]
This implies by \eqref{ex:c1} that $\dbar(\{i : z_i=1\})\geq \alpha/2$ and so $z$ must be a periodic point with $d(\{i : z_i=1\})=\beta\geq \alpha/2$.
On the other hand by \eqref{ex:c2} we see that $\overline{d}(\{ i : z_i=0\})\geq 1-\alpha/3>1-\beta$ which is a contradiction. This shows that $X$ cannot have the average shadowing property. The proof is finished by Corollary~\ref{lem:approx}.
\end{proof}

\section*{Acknowledgements} 
We would like to thank both the reviewers for their contribution and comments. 
Dominik Kwietniak was supported by the National Science Centre (NCN) under
grant 2013/08/A/ST1/00275 and partially supported by CAPES/Brazil grant no. 88881.064927/2014-01. Martha \L{}\k{a}cka was supported by grant no. PSP: K/DSC/002690. Research of Piotr Oprocha was partly supported by the project ``LQ1602 IT4Innovations excellence in science''.


\def\ocirc#1{\ifmmode\setbox0=\hbox{$#1$}\dimen0=\ht0 \advance\dimen0
	by1pt\rlap{\hbox to\wd0{\hss\raise\dimen0
			\hbox{\hskip.2em$\scriptscriptstyle\circ$}\hss}}#1\else {\accent"17 #1}\fi}

\end{document}